\documentclass[a4paper,12pt]{article}

\usepackage[left=2cm,top=2.5cm,right=2cm,bottom=2.5cm]{geometry}

%\usepackage{pgf,tikz,pgfplots}
%\pgfplotsset{compat=1.14}
%\usepackage{mathrsfs}
%\usetikzlibrary{arrows}

%\usepackage[spanish]{babel}
%\renewcommand{\shorthandsspanish}{}
\usepackage[usenames,dvipsnames]{color}
\usepackage{amsthm}
\usepackage{amsmath,amssymb}
\usepackage{enumitem}

\newtheorem{theo}{Theorem}[section] 
\newtheorem*{theo*}{Theorem}
\newtheorem{lem}[theo]{Lemma} 
\newtheorem{prop}[theo]{Proposition}

\newtheorem{rem}{Remark}[section]
\newtheorem{definition}{Definition}[section]

\newcommand{\R}{\mathbb{R}}\newcommand{\Rn}{\R^n}\newcommand{\Rnn}{\R^{n\times n}}
\newcommand{\N}{\mathbb{N}}
\DeclareMathOperator{\supp}{supp}
\DeclareMathOperator{\diver}{div}

\DeclareMathOperator{\tr}{tr} 

\DeclareMathOperator{\dist}{dist}
\DeclareMathOperator{\loc}{loc}
\DeclareMathOperator{\sym}{sym}

\DeclareMathOperator{\id}{id}

\renewcommand{\O}{\Omega}
\renewcommand{\a}{\alpha}

\renewcommand{\d}{\delta}

\newcommand{\G}{\Gamma}
\newcommand{\p}{\partial}

\newcommand{\e}{\varepsilon}

\usepackage{amsmath}
\usepackage{latexsym}
\usepackage{amssymb}

\usepackage{url}

%BEGIN definition of average integral

\def\XXint#1#2#3{{\setbox0=\hbox{$#1{#2#3}{\int}$}
		\vcenter{\hbox{$#2#3$}}\kern-.5\wd0}}

%END definition of average integral

\title{Eringen's model via linearization of nonlocal hyperelasticity}

\author{J. C. Bellido \\
{\small E.T.S.I.\ Industriales, Department of Mathematics, Universidad de Castilla-La Mancha,} \\
{\small 13071-Ciudad Real, Spain. Email \url{JoseCarlos.Bellido@uclm.es}},\\
J. Cueto \\
{\small  Department of Mathematics, University of Nebraska-Lincoln,} \\
{\small Lincoln NE 68588-0130, US. Email \url{jcuetogarcia2@unl.edu}}, \\
C. Mora-Corral \\
{\small Departamento de Matem\'aticas, Universidad Aut\'onoma de Madrid,} \\
{\small 28049 Madrid, Spain and Instituto de Ciencias Matem\'aticas, } \\
{\small CSIC-UAM-UC3M-UCM, 28049 Madrid, Spain. Email \url{Carlos.Mora@uam.es}}}

\date{\today}

\usepackage{graphicx}
\usepackage{wrapfig}

\begin{document}

\maketitle

\pagestyle{empty}

% %\addcontensline{toc}{chapter}{Contents}
%\tableofcontents
% %\markboth{{\bf Contents}}{{\bf Contents}}

%\newpage

\pagestyle{plain}

\begin{abstract}
We consider Riesz' fractional gradient and a truncated version of it.
The equations of nonlocal nonlinear elasticity based on those gradients are known.
We perform a formal linearization and arrive at the equations of linear elasticity based on those nonlocal operators.
We prove the existence of solutions of the linear equations, notably, by a nonlocal version of Korn's inequality.
Finally, we show that the linearizations obtained are particular cases of Eringen's model with singular kernels.
\end{abstract}

\section{Introduction}

In nonlinear elastostatics, a fundamental question is the existence of equilibrium solutions of the equations of nonlinear elasticity, which often arise as minimizers of the elastic energy
\begin{equation*}%\label{eq:elastic}
 \int_{\O} W (x, D u (x)) \, dx
\end{equation*}
of a deformation $u : \O \to \Rn$.
Here $\O$ is an open bounded subset of $\Rn$ representing the reference configuration of the body (where $n=3$ is the physically relevant case), and $W : \O \times \Rnn \to \R \cup \{\infty\}$ is the elastic stored-energy function of the material.
The usual approach for finding such minimizers is the direct method of the calculus of variations.
This theory is well established since the pioneering paper of Ball \cite{Ball77} and its many subsequent refinements. 

On the other hand, nonlocal models in solid mechanics have experienced a huge development in the last two decades, especially from the introduction of the peridynamics model by Silling \cite{Silling2000}.
Many refinements have been introduced since then and, particularly, nonlocal models based on a nonlocal gradient have received a great attention as an adequate substitute of local models \cite{DeGuMeSc21, DeGuOlKa21}.

In general, a nonlocal gradient of a function $u : \O \to \R$ takes the form 
\[
 \mathcal{G}_\rho u(x)= \int_\O \frac{u(x)-u(y)}{|x-y|} \frac{x-y}{|x-y|} \rho(x-y)\, dy ,
\]
for a suitable kernel $\rho$, usually with a singularity at the origin.
The choice of $\rho$ determines the nonlocal gradient, which, in turn, specifies the functional space.

The most popular nonlocal gradient is possibly Riesz' $s$-fractional gradient, which is denoted by $D^s u$ and corresponds to the choices $\O = \Rn$ and $\rho(x) = \frac{c_{n,s}}{|x|^{n-1+s}}$ for some constant $c_{n,s}$; see \cite{ShSp15,ShSp17}.
Here $0<s<1$ is the degree of differentiability.
While Riesz' fractional gradient enjoys many desirable properties (it is invariant under rotations and translations, it is $s$-homogeneous under dilations; see \cite{Silhavy20}), it has the drawback that the integral defining it is over the whole space, which makes it unsuitable for solid mechanics where the body is represented by a bounded domain $\O \subset \Rn$.
An adaptation of Riesz' $s$-fractional gradient for bounded domains was done by the authors in \cite{BeCuMo22}.
Precisely, for a $C^{\infty}_c$ function $u$, its nonlocal gradient is defined as
\[
 D_\delta^s u(x) = c_{n,s} \int_{B(x,\delta)} \frac{u(x)-u(y)}{|x-y|}\frac{x-y}{|x-y|}\frac{w_\d(x-y)}{|x-y|^{n-1+s}} \, dy ,
\]
where $w_{\d}$ is a fixed function in $C^{\infty}_c (B(0, \d))$ satisfying some natural properties to be a truly cut-off function.
Here $\d>0$ plays the role of the horizon (in the terminology of peridynamics), i.e., the maximum interaction distance.

The existence of minimizers of functionals 
\begin{equation}\label{eq:integralDsd}
 \int_{\O} W (x, D^s_{\d} u(x)) \, dx
\end{equation}
was done in \cite{BeCuMo23} under the assumption of polyconvexity of $W$ (where we also wrote down the corresponding Euler--Lagrange equations), and in \cite{CuKrSc22} under the assumption of quasiconvexity.

Coming back to nonlocal theories in linear elasticity, one of the most popular, particularly in the engineering community, is that due to Eringen \cite{EringenBook}. Eringen's model in linear elasticity is similar to the classical Lam\'e-Navier system but differing from it in the fact that the classical stress tensor is substituted by a nonlocal one, computed from the local stress tensor through averaging:
\[
\sigma (x) = \int_\O A(x,x') C D_{\sym} v(x') \, dx' ,
\]
where $\sigma$ is the averaged stress, $A$ the nonlocal interaction kernel, $C$ the fourth-order elastic tensor and $D_{\sym}v$ the linearized strain (the symmetric part of the displacement gradient). Different variational formulations for this model are given in \cite{Polizzotto}. Thus, this is an integral theory, as the stress at a point in the domain depends, through averaging, on the stress at points around it.
Because of this fact, in the engineering literature it is said that the model belongs to the framework of strong nonlocal theories.
This model has been widely used in a variety of applications in mechanics, and nowadays it is attracting a revitalized interest owing to its applicability to the modeling of nanobeams and nanobars \cite{Romano}. A study on existence and uniqueness of solutions of Erigen's model of linear elasticity can be found in \cite{EvBe}. 

In this article we link the two theories above in the following way.
We start with the Euler--Lagrange equations associated to the functional \eqref{eq:integralDsd}.
They can be interpreted as the equilibrium equations in nonlocal elasticity.
Then we perform a linearization of those equations, which is merely formal, as done classically in texts of elasticity.
As expected, the linear equations are remarkably similar to the linear ones, just by replacing the local differential operators (notably, gradient, divergence and Laplacian) by their nonlocal counterparts.
In a further stage, we prove existence and uniqueness of those linear equations under the same assumptions as in the classical case, namely, the positive definiteness of the elasticity tensor.
This tensor is a local quantity acting on nonlocal gradients.
The key ingredient for the well-posedness of the linear equations is a suitable version of Korn's inequality for nonlocal gradients.
Our proof of that inequality is based on the classical Korn inequality together with a procedure described in \cite{BeCuMo23} and \cite{CuKrSc22} (and, earlier, in \cite{KrSc22} for the case of the Riesz potential) to translate results from the local case to the nonlocal context.

For completeness, we do a parallel analysis in the fractional case, i.e., with Riesz' fractional gradient.
The caveat of this process in the fractional case is that the linearization is performed around the identity (as in classical elasticity), but the identity has infinite energy, i.e., it does not belong to the associated Bessel space because that space involves the whole $\Rn$.
Hence, the linearization process is dubious, and that is why we prefer to present first the nonlocal case, which is done over a bounded domain and where the identity belongs to the relevant space.
 
These two analyses together allow for a better comparison with Eringen's model.
In general terms, we will see that two suitable choices of the kernel in Eringen's model are equivalent, respectively, to the nonlocal and fractional linear models derived earlier. More concretely we show in Section 8, that, both functional spaces where problemas are set and bilinear forms defining those problems, coincide for the linealization we obtain both for nonlocal and fractional problems with Eringen's problems with nonlocal kernels related to function $Q_\d^s$ (defined in the next section) and to Riesz potential respectively. 

We mention two articles that also deal with the equations of linear elasticity for fractional operators.
First, \cite{Silhavy22} introduced the linear isotropic model in fractional elasticity, but without a derivation: just by subtituting the local operators by nonlocal ones.
His model coincides with ours in the fractional case, and, in addition, he proves a version of Korn's inequality, with techniques relying on Fourier and Riesz transforms.
Second, the article \cite{Scott23} also studies the linear isotropic fractional elasticity equation, with a motivation coming from state-based peridynamics and the fractional powers of the local operator.

The outline of this article is as follows.
In Section \ref{se:notation} we establish some general notation.
Section \ref{se:functional} describes the nonlocal framework of the article: nonlocal gradient and divergence, functional space with useful embeddings, and some differential identities.
In Section \ref{eq:derivativelinear} we do an elementary calculation showing that the nonlocal derivative of an affine map is the associated linear map.
Section \ref{se:linearization} performs the linearization of the equations of nonlocal elasticity based on the nonlocal gradient $D^s_{\d}$.
In Section \ref{se:existence} we prove the relevant version of Korn's inequality to show existence and uniqueness of solutions of these equations.
The analogous results based on the fractional gradient $D^s$ are shown in Section \ref{se:fractional}: linearization, existence and uniqueness.
In the final Section \ref{se:Eringen} we explain Eringen's model and how it is connected with the linearizations found earlier.

\section{Notation}\label{se:notation}

We fix the dimension $n \in \N$ of the space, an open bounded set $\O$ of $\Rn$ representing the body, the degree $0< s < 1$ of differentiability, a $\d>0$ indicating the horizon (the interaction distance between the particles of the body), and an exponent $1 \leq p < \infty$ of integrability.
The dual exponent of $p$ is $p' = \frac{p}{p-1}$.

The notation for Sobolev $W^{1,p}$ and Lebesgue $L^p$ spaces is standard, as is that of smooth functions of compact support $C^{\infty}_c$.
The support of a function is denoted by $\supp$.
We will indicate the domain of the functions, as in $L^p (\O)$; the target is indicated only if it is not $\R$.
When using the norm in those spaces, the target is omitted, as in $\left\| \cdot \right\|_{L^p (\O)}$.

We write $B(x,r)$ for the open ball centred at $x \in \Rn$ of radius $r >0$.
The complement of an $A \subset \Rn$ is denoted by $A^c$.
%its closure by $\bar{A}$ and its boundary by $\p A$.

We denote by $\Rnn$ the set of $n \times n$ matrices.
The identity matrix is $I$ and the identity map is $\id$.
The subset of  symmetric matrices is $\Rnn_{\sym}$.
The symmetric part $H_{\sym}$ of an $H \in \Rnn$ is
\[
 H_{\sym} = \frac{H + H^T}{2} ,
\]
where $H^T$ is the transpose of $H$.
Similarly, $D_{\sym}$, $D^s_{\sym}$ and $D^s_{\d, \sym}$ denote the symmetric parts of the gradients $D$, $D^s$ and $D^s_{\d}$.
The inner product in $\Rn$ is denoted by $\cdot$, while $:$ is the inner product in $\Rnn$.
% for $\alpha=(\alpha_{km})_{1 \leq k,m \leq n}$ and $\beta=(\beta_{km})_{1 \leq k,m \leq n}$, $\alpha:\beta=\sum_{k,m=1}^{n}\alpha_{km}\beta_{km}$.
The tensor product of two vectors in $\Rn$ is denoted by $\otimes$.

The divergence of a matrix is the column vector whose components are the divergence of the rows of the matrix.

The operation of convolution is denoted by $*$.

\section{Functional space}\label{se:functional}

This section is a compendium of the definitions and results taken from \cite{BeCuMo22,BeCuMo23} on the nonlocal gradient and divergence, as well as their associated space $H^{s,p,\d} (\O)$.

Apart from $\O$, the sets $\O_{\d}= \O + B(0,\d)$ and $\O_{-\d} := \{ x \in \O : \dist (x, \O^c) > \d \}$ will also be relevant. 
The number $\d>0$ is chosen small enough so that $\O_{-\d}$ is not empty.
%We will assume that $\O_{\d}$ has a Lipschitz boundary, since in some intermediate results we will require that $\O_{\d}$ is an extension domain (for Sobolev functions).

Let $w_\d: \Rn\to [0,+\infty)$ be a cut-off function, and  $\rho_\d: \Rn\to [0,+\infty)$ defined as 
\[\rho_\delta(x)=\frac{1}{\gamma (1-s)|x|^{n-1+s}}w_{\d}(x),\] 
where the constant $\gamma(s)$ is given by
\begin{equation}\label{eq:gamma}
	\gamma(s)=\frac{\pi^{\frac{n}{2}}2^s\Gamma\left(\frac{s}{2}\right)}{\Gamma\left(\frac{n-s}{2}\right)}
\end{equation}
and $\Gamma$ is Euler's gamma function.
Moreover, set
	\begin{equation*}
		c_{n,s}:= \frac{n-1+s}{\gamma(1-s)}.
	\end{equation*}
The precise assumptions on $w_{\d}$ are as follows:
\begin{enumerate}[label=\alph*)]
	\item\label{item:wA} $w_\d$ is radial and nonnegative.
	\item $w_\d \in C_c^\infty(B(0,\delta))$.
	\item There are constants $a_0>0$ and $0<b_0<1$ such that $0\leq w_\d \leq a_0$, and $w_\d |_{B(0, b_0\delta)} =a_0$.
	\item\label{item:wdecreasing} $w_\d$ is radially decreasing, i.e. 
	$w_\d (|x_1|) \geq w_\d (|x_2|)$ if $|x_1| \leq |x_2|$.
	
		\item\label{item:wnormalization} $\displaystyle \int_{B(0,\d)}\frac{w_\d(z)}{|z|^{n+s-1}} \, dz = \frac{n}{c_{n,s}}$.

\end{enumerate}

Conditions \ref{item:wA}--\ref{item:wdecreasing} are taken from \cite[Sect.\ 3]{BeCuMo22}, where it is remarked that condition \ref{item:wdecreasing} can be weakened.
Condition \ref{item:wnormalization} is a normalization.
In \cite{BeCuMo22} we did not assume any particular normalization; in fact, there are several natural options, and we have chosen \ref{item:wnormalization} in view of the result of Section \ref{eq:derivativelinear}: the nonlocal derivative of a linear map is the matrix representing that linear map.
Note that \ref{item:wnormalization} can be equivalently written as $\|\rho_\d\|_{L^1(\Rn)} = \frac{n}{n-1+s}$.  

The definitions of the nonlocal gradient and divergence for smooth functions are as follows.
\begin{definition} \label{def: nonlocal gradient}
	\begin{enumerate}[label=\alph*)]
		\item \label{item:Dsdu}
The nonlocal gradient of $u\in C_c^{\infty} (\Rn)$ is defined as
		\begin{equation*}%\label{eq: definition of nonlocal gradient}
			D_\delta^s u(x)= c_{n,s} \int_{B(x,\delta)} \frac{u(x)-u(y)}{|x-y|}\frac{x-y}{|x-y|}\frac{w_\d(x-y)}{|x-y|^{n-1+s}} \, dy , \qquad x \in \Rn ,
		\end{equation*}
and of $u\in C_c^{\infty} (\Rn, \Rn)$ as
\begin{equation*}%\label{eq: definition of nonlocal gradient}
 D_\delta^s u(x)= c_{n,s} \int_{B(x,\delta)} \frac{u(x)-u(y)}{|x-y|} \otimes \frac{x-y}{|x-y|}\frac{w_\d(x-y)}{|x-y|^{n-1+s}} \, dy , \qquad x \in \Rn .
\end{equation*}

		\item%\label{item:divsdu}
The nonlocal divergence of $u \in C^{\infty}_c (\Rn,\Rn)$ is defined as
\begin{equation*}
			\diver_{\delta}^s u(x)=  c_{n,s} \int_{B(x,\delta)} \frac{u(x)-u(y)}{|x-y|}\cdot\frac{x-y}{|x-y|}\frac{w_\d(x-y)}{|x-y|^{n-1+s}} \, dy, \qquad x \in \Rn ,
		\end{equation*}
and of $u\in C_c^{\infty} (\Rn, \Rnn)$ as
\[
\diver_{\delta}^s u(x)=  c_{n,s} \int_{B(x,\delta)} \frac{u(x)-u(y)}{|x-y|} \frac{x-y}{|x-y|}\frac{w_\d(x-y)}{|x-y|^{n-1+s}} \, dy, \qquad x \in \Rn .
\]
%	\item \label{item:Laplaciansdu} The nonlocal Laplacian of $u \in C^{\infty}_c (\Rn)$ or $u \in C^{\infty}_c (\Rn, \Rn)$ is defined as
%Poner la definici\'on puntual y en la siguiente secci\'on demostrar que $\Delta_{\delta}^s u = \diver^s_\d D^s_\d u$. Pero esto parece dif\'icil.
	\end{enumerate}
\end{definition}

%The nonlocal Laplacian $\Delta^s_\d$ is defined for $x \in \O$ (or $x \in \O_{-\d}$) as long as the function $u$ is defined in $\O_{2\d}$ (or, respectively, $x\in \O_{\d}$).

%\textit{No sé si para introducirlo aquí, pero este Laplaciano no local también se podría escribir con una estructura similar a la del Laplaciano fraccionario. En concreto, en \cite[Theorem 4.1]{DeGuOlKa21} muestran la equivalencia de lo que llaman "weighted nonlocal laplacian" (obtenido como composición de "weighted" nonlocal divergence and gradient) con "unweighted nonlocal laplacian", un operador integral con un núcleo, $\gamma(x,y)$, que coincidiría con $\beta(x)= \nabla Q_\d^s *\nabla Q_\d^s$ tras usar simetría impar. Aunque ahí no lo hicieron teniendo en cuenta el valor principal para núcleos fuertemente singulares, la prueba también sería válida en nuestro caso ya que siempre aparecen diferencias $u(x)-u(y)$. La caracterización por medio de Fourier está en mi tesis.}

Notice that the integrals in Definition \ref{def: nonlocal gradient} are absolutely convergent because $u$ is Lipschitz and $\rho_{\d} \in L^1 (\Rn)$.

We extend Definition \ref{def: nonlocal gradient}\,\ref{item:Dsdu} to a broader class of functions.

\begin{definition}\label{def: nonlocal gradient 2}
	\begin{enumerate}[label=\alph*)]
		\item
		Let $u \in L^1 (\O_\d)$ be such that there exists a sequence of $\{ u_j \}_{j \in \N} \subset C^{\infty}_c (\Rn)$ converging to $u$ in $L^1 (\O_\d)$ and for which $\{ D_\d^s u_j \}_{j \in \N}$ converges to some $U$ in $L^1 (\O, \Rn)$.
		We define $D_\d^s u$ as $U$.
		
		\item%\label{item:divs delta u}
		Let $\phi \in L^1 (\O_\d, \Rn)$ be such that there exists a sequence of $\{ \phi_j \}_{j \in \N} \subset C^{\infty}_c (\Rn, \Rn)$ converging to $\phi$ in $L^1 (\O_\d, \Rn)$ and for which $\{ \diver_\d^s \phi_j \}_{j \in \N}$ converges to some $\Phi$ in $L^1 (\O)$.
		We define $\diver_\d^s \phi$ as $\Phi$.
	\end{enumerate}
\end{definition}

It was shown in \cite[Lemma 3.3]{BeCuMo22} that the above definitions of $D_\d^s u$ and $\diver_\d^s \phi$ are independent of the sequence chosen, and that the assumption $L^1$ can be weakened to $L^1_{\loc}$.

An important result is the nonlocal integration by parts formula, which will be used for passing from strong to weak nonlocal formulations. Here, we state a simplified version of \cite[Th.\ 3.2]{BeCuMo22} suitable for our aim in this paper. 

\begin{prop}\label{th:Nl parts}
Suppose that $u \in C_c^{\infty} (\O)$ and $\phi \in C^1_c (\O, \Rn)$.
Then $D^s_{\d} u \in L^{\infty} (\Rn, \Rn)$ and $\diver^s_{\d} \phi \in L^{\infty} (\Rn)$.
Moreover,
	%\begin{align*}
		\[\int_{\O} D_\delta^s u(x) \cdot \phi(x) \, dx = - \int_{\O} u(x) \diver_\delta^s \phi (x) \, dx ,\]
		%- (n-1+s) \int_{\O} \int_{\O_{B, \d}} \frac{u(y)\phi(x)}{|x-y|} \cdot \frac{x-y}{|x-y|}\rho_\delta(x-y) \, dy \, dx
	%\end{align*}
and these two integrals are absolutely convergent.
\end{prop}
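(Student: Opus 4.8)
The plan is to dispatch the boundedness claims first, which are elementary, and then to establish the integration-by-parts identity through a principal-value regularization that reduces everything to Fubini's theorem and the change of variables $x\leftrightarrow y$.

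For the $L^\infty$ bounds: since $u\in C^\infty_c(\O)$ is globally Lipschitz, $|u(x)-u(y)|\le\|Du\|_{L^\infty}|x-y|$, so the integrand in the definition of $D^s_\d u(x)$ is dominated in modulus by $\|Du\|_{L^\infty}\,\tfrac{w_\d(x-y)}{|x-y|^{n-1+s}}=\|Du\|_{L^\infty}\,\gamma(1-s)\,\rho_\d(x-y)$; integrating in $y$ and using $\rho_\d\in L^1(\Rn)$ (with $\|\rho_\d\|_{L^1}=\tfrac{n}{n-1+s}$ by the normalization \ref{item:wnormalization}) gives $\|D^s_\d u\|_{L^\infty}\le c_{n,s}\,\gamma(1-s)\,\|\rho_\d\|_{L^1}\,\|Du\|_{L^\infty}<\infty$. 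The identical estimate with $\phi\in C^1_c(\O,\Rn)$ in place of $u$ yields $\diver^s_\d\phi\in L^\infty(\Rn)$. One also notes directly that $D^s_\d u(x)=0$ whenever $\dist(x,\supp u)\ge\d$, so $D^s_\d u$ — and likewise $\diver^s_\d\phi$ — is supported in a fixed compact set. Since $\phi$ is bounded with compact support, $D^s_\d u\cdot\phi\in L^1(\O)$; symmetrically $u\,\diver^s_\d\phi\in L^1(\O)$, so both integrals in the statement are absolutely convergent.

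For the identity I would introduce, for $0<\e<\d$, the truncated operators
\[
 D^{s,\e}_\d u(x):=c_{n,s}\int_{B(x,\d)\setminus B(x,\e)}\frac{u(x)-u(y)}{|x-y|}\,\frac{x-y}{|x-y|}\,\frac{w_\d(x-y)}{|x-y|^{n-1+s}}\,dy
\]
and $\diver^{s,\e}_\d\phi$ analogously. The reason for the truncation is the crux of the matter: in the un-truncated double integral one cannot split the difference $u(x)-u(y)$, since each resulting piece carries a non-integrable singularity of order $|x-y|^{-(n+s)}$ on the diagonal, and it is exactly the cancellation in $u(x)-u(y)$ (respectively $\phi(x)-\phi(y)$) that keeps the whole expression integrable. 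After truncation the kernel $z\mapsto\tfrac{z}{|z|^2}\tfrac{w_\d(z)}{|z|^{n-1+s}}\mathbf 1_{\{\e<|z|<\d\}}$ is bounded, odd and compactly supported, so — thanks to the compact supports of $u$ and $\phi$ — all the ensuing integrals are absolutely convergent and Fubini applies without restriction.

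I would then compute as follows. Writing $\int_\O D^{s,\e}_\d u\cdot\phi$ as an iterated integral over $\{\e<|x-y|<\d\}$ and splitting $u(x)-u(y)$, the $u(x)$-term drops out because $\int_{\{\e<|z|<\d\}}\tfrac{z}{|z|^2}\tfrac{w_\d(z)}{|z|^{n-1+s}}\,dz=0$ by oddness (here the radiality of $w_\d$ is used), and in the surviving $u(y)$-term the change of variables $x\leftrightarrow y$ (again using that $w_\d$ is radial) gives
\[
 \int_\O D^{s,\e}_\d u(x)\cdot\phi(x)\,dx=c_{n,s}\int_{\Rn}\int_{\{\e<|x-y|<\d\}}u(x)\,\phi(y)\cdot\frac{x-y}{|x-y|^2}\,\frac{w_\d(x-y)}{|x-y|^{n-1+s}}\,dy\,dx .
\]
The analogous manipulation applied to $-\int_\O u\,\diver^{s,\e}_\d\phi$ (split $\phi(x)-\phi(y)$; the $\phi(x)$-term drops out by the same oddness) produces the very same right-hand side, so $\int_\O D^{s,\e}_\d u\cdot\phi=-\int_\O u\,\diver^{s,\e}_\d\phi$ for every $\e\in(0,\d)$. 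Finally I would let $\e\to0$: one has $D^{s,\e}_\d u(x)\to D^s_\d u(x)$ for every $x$ (the discarded contribution on $B(x,\e)$ being $O(\e^{1-s})$ uniformly in $x$), with a uniform $L^\infty$ bound and fixed compact support, and $\phi\in L^1(\O)$, so dominated convergence yields $\int_\O D^{s,\e}_\d u\cdot\phi\to\int_\O D^s_\d u\cdot\phi$, and similarly on the divergence side; passing to the limit in the truncated identity finishes the proof. The main obstacle is the one flagged above — the diagonal singularity of order $|x-y|^{-(n+s)}$ — which forbids a naive application of Fubini together with splitting on the original integral; the $\e$-regularization is precisely the device that lets one split while retaining the cancellation in the limit.
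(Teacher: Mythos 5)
Your proposal is correct, and it is worth noting that the paper itself gives no argument for this proposition: it is stated as a simplified version of \cite[Th.\ 3.2]{BeCuMo22}, so your proof is a self-contained substitute rather than a variant of something proved here. The two $L^\infty$ bounds and the absolute convergence are handled exactly as one would expect (Lipschitz bound on the difference quotient plus $\rho_\d\in L^1$, and the observation that $D^s_\d u$, $\diver^s_\d\phi$ vanish at distance $\ge\d$ from the supports), and your $\e$-truncation correctly circumvents the only real obstruction, namely that splitting $u(x)-u(y)$ destroys the absolute integrability needed for Fubini; after truncation the oddness of $z\mapsto z\,w_\d(z)/|z|^{n+s+1}$ kills the diagonal terms, the relabelling $x\leftrightarrow y$ identifies the two sides, and the uniform $O(\e^{1-s})$ error justifies the passage to the limit. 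A slightly shorter route, which avoids the truncation altogether, is to keep the difference quotients intact (so each double integral is absolutely convergent, since $|u(x)-u(y)|\le \|Du\|_{L^\infty}|x-y|$ and likewise for $\phi$), symmetrize both sides by the swap $x\leftrightarrow y$, and observe that the sum of the two sides collapses to the integral over $\Rn$ of $\diver^s_\d(u\phi)$, whose integrand is antisymmetric in $(x,y)$ and absolutely integrable, hence zero; this trades your regularization for one symmetrization step. Both arguments are valid, and your version has the merit of making explicit where the cancellation in $u(x)-u(y)$ is actually used.
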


The functional space associated to these nonlocal differential operators is as follows \cite{BeCuMo22}.

\begin{definition}\label{de:Hspd}
Let $1 \leq p < \infty$.
We define the space $H^{s,p,\d}(\O)$ as the closure of $C^{\infty}_c(\Rn)$ under the norm 
\[
 \left\| u \right\| _{H^{s,p,\d}(\O)} = \left( \left\| u \right\|_{L^p (\O_{\d})}^p + \left\| D_\delta^s u \right\|_{L^p (\O)}^p \right)^{\frac{1}{p}} .
\]
\end{definition}
Thus, functions in $H^{s,p,\d}(\O)$ are defined a.e.\ in $\O_{\d}$, while its gradient (Definition \ref{def: nonlocal gradient 2}) is defined a.e.\ in $\O$.
The space  $H^{s,p,\d}(\O, \Rn)$ is defined analogously, as are the gradient of vector-valued functions and the divergence of matrix-valued functions (Definition \ref{def: nonlocal gradient}).

The space $H^{s,p,\d}(\O)$ satisfies reflexivity and separability properties.  
See \cite[Prop 3.4]{BeCuMo22}.
\begin{prop}%\label{prop: espacio separable y reflexivo}
Let $1\leq p < \infty$.
Then $H^{s,p,\d}(\Omega)$ is a separable Banach space.
If, in addition, $p > 1$, it is reflexive.
When $p=2$, it is a Hilbert space under the inner product
\[
 (u, v) \mapsto \int_{\O_{\d}} u\, v \, dx + \int_{\O}  D^s_{\d} u \cdot D^s_{\d} v \, dx .
\]
\end{prop}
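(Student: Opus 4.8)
The plan is to establish the three assertions in the order stated: first that $H^{s,p,\d}(\O)$ is a Banach space, then separability, then reflexivity for $p>1$, and finally the Hilbert space structure when $p=2$. The natural device throughout is the isometric embedding
\[
 \iota : H^{s,p,\d}(\O) \longrightarrow L^p(\O_\d) \times L^p(\O,\Rn), \qquad u \mapsto (u, D^s_\d u),
\]
which is well defined precisely because of Definition \ref{def: nonlocal gradient 2} and the fact, recalled from \cite[Lemma 3.3]{BeCuMo22}, that $D^s_\d u$ does not depend on the approximating sequence. By construction of the norm in Definition \ref{de:Hspd}, the map $\iota$ preserves norms, so $H^{s,p,\d}(\O)$ is isometrically isomorphic to its image $\iota\big(H^{s,p,\d}(\O)\big)$ inside the product space.

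For completeness, I would show that $\iota\big(H^{s,p,\d}(\O)\big)$ is closed in $L^p(\O_\d)\times L^p(\O,\Rn)$. Take a sequence $u_j$ in $H^{s,p,\d}(\O)$ with $\iota(u_j) = (u_j, D^s_\d u_j) \to (u, U)$ in the product norm. Since each $u_j$ is itself a limit (in $H^{s,p,\d}(\O)$, hence in the product) of functions in $C^\infty_c(\Rn)$, a diagonal argument produces a sequence $\{v_k\}\subset C^\infty_c(\Rn)$ with $v_k \to u$ in $L^p(\O_\d)$ and $D^s_\d v_k \to U$ in $L^p(\O,\Rn)$; here one uses that $L^p$-convergence on the bounded sets $\O_\d$ and $\O$ implies $L^1$-convergence, so Definition \ref{def: nonlocal gradient 2} applies and gives $U = D^s_\d u$ with $u \in H^{s,p,\d}(\O)$ and $\iota(u)=(u,U)$. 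Thus the image is closed. Since $L^p(\O_\d)\times L^p(\O,\Rn)$ is a Banach space (for $1\le p<\infty$), a closed subspace is Banach, separable (for all such $p$, as $L^p$ on a finite-measure set is separable and finite products and subspaces of separable metric spaces are separable), and reflexive when $1<p<\infty$ (since $L^p$ is reflexive then, finite products of reflexive spaces are reflexive, and closed subspaces of reflexive spaces are reflexive). Transporting these properties back through the isometry $\iota$ gives the first three claims.

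For $p=2$ the product space $L^2(\O_\d)\times L^2(\O,\Rn)$ is a Hilbert space with the inner product $\big((f,F),(g,G)\big) \mapsto \int_{\O_\d} f g\,dx + \int_\O F\cdot G\,dx$, and the norm it induces is exactly $\|\cdot\|_{H^{s,2,\d}(\O)}$. Pulling this inner product back along $\iota$ yields the stated bilinear form $(u,v)\mapsto \int_{\O_\d} u\,v\,dx + \int_\O D^s_\d u\cdot D^s_\d v\,dx$; it is bilinear, symmetric, and positive definite because it reproduces the $H^{s,2,\d}(\O)$ norm, and the space is complete by the Banach assertion, so it is a Hilbert space.

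The only genuinely nontrivial point is the closedness of $\iota\big(H^{s,p,\d}(\O)\big)$, i.e.\ the diagonal extraction showing that a limit of $H^{s,p,\d}$-functions whose gradients also converge is again in $H^{s,p,\d}$ with the expected gradient; everything else is a formal transfer of standard Banach/Hilbert space facts through an isometry. That closedness is essentially a restatement of the consistency of Definition \ref{def: nonlocal gradient 2} under iterated approximation, and the main care needed is to keep the two ambient domains $\O_\d$ (for the function) and $\O$ (for its nonlocal gradient) straight when passing between $L^p$ and $L^1$ convergence.
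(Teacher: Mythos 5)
Your argument is correct and is essentially the standard one: the paper itself gives no proof here but cites \cite[Prop.\ 3.4]{BeCuMo22}, where the same strategy is used, namely the isometric identification $u \mapsto (u, D^s_\d u)$ of $H^{s,p,\d}(\O)$ with a closed subspace of $L^p(\O_\d) \times L^p(\O,\Rn)$ and the transfer of completeness, separability, reflexivity and the Hilbert structure from the product space. Your diagonal argument for closedness is sound (and, given that the space is \emph{defined} as a closure of $C^\infty_c(\Rn)$ under the stated norm, it mainly re-verifies the consistency of Definition \ref{def: nonlocal gradient 2}), so nothing essential is missing.
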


%As $\O$ is bounded, the continuous inclusion $H^{s,p,\d}(\O) \subset H^{s,q,\d}(\O)$ holds whenever $1\leq q \leq p < \infty$.
% (this was stated in \cite[Prop.\ 3.5]{BeCuMo22}, but the proof is immediate).

The following two results present each natural differential equalities. 

\begin{lem}\label{le:trD}
For all $u \in H^{s,p,\d} (\O, \Rn)$,
\[
 \tr D_\d^s u =\tr (D_\d^s u)^T =\tr D_{\d,\sym}^s u = \diver_\d^s u .
\]
\end{lem}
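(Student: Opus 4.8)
\emph{Sketch of proof.} The asserted chain of equalities splits into a trivial algebraic part and one substantive identity. For every $H \in \Rnn$ one has $\tr H = \tr H^T$, hence $\tr H_{\sym} = \tfrac{1}{2}\left( \tr H + \tr H^T \right) = \tr H$; applying this a.e.\ in $\O$ with $H = D_\d^s u(x)$ yields $\tr D_\d^s u = \tr (D_\d^s u)^T = \tr D_{\d,\sym}^s u$ at once. So the only thing to prove is $\tr D_\d^s u = \diver_\d^s u$.

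I would first treat the smooth case $u \in C_c^\infty(\Rn, \Rn)$, where the integral in Definition~\ref{def: nonlocal gradient}\,\ref{item:Dsdu} is absolutely convergent. Using $\tr(a \otimes b) = a \cdot b$ for $a, b \in \Rn$ and commuting the (linear, finite-dimensional) trace with the integral, the matrix integrand $\frac{u(x)-u(y)}{|x-y|} \otimes \frac{x-y}{|x-y|}\, \frac{w_\d(x-y)}{|x-y|^{n-1+s}}$ becomes $\frac{u(x)-u(y)}{|x-y|} \cdot \frac{x-y}{|x-y|}\, \frac{w_\d(x-y)}{|x-y|^{n-1+s}}$, which is exactly the integrand defining $\diver_\d^s u$. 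Hence $\tr D_\d^s u = \diver_\d^s u$ pointwise on $\Rn$ for such $u$.

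For general $u \in H^{s,p,\d}(\O, \Rn)$ I would argue by density. By Definition~\ref{de:Hspd} there are $u_j \in C_c^\infty(\Rn, \Rn)$ with $u_j \to u$ in $L^p(\O_\d, \Rn)$ and $D_\d^s u_j \to D_\d^s u$ in $L^p(\O, \Rnn)$. Since $\tr$ is linear and continuous on $\Rnn$, and $\O$, $\O_\d$ are bounded (so $L^p \hookrightarrow L^1$ on each), we get $u_j \to u$ in $L^1(\O_\d, \Rn)$ and $\diver_\d^s u_j = \tr D_\d^s u_j \to \tr D_\d^s u$ in $L^1(\O)$. By the definition of the nonlocal divergence of integrable vector fields (Definition~\ref{def: nonlocal gradient 2}, part (b)), together with the independence of that construction from the approximating sequence (\cite[Lemma 3.3]{BeCuMo22}), this shows that $\diver_\d^s u$ is well defined and equals $\tr D_\d^s u$, which completes the argument.

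I do not expect a genuine obstacle: the statement is essentially a one-line computation in the smooth case followed by a routine density passage. The only points deserving a moment's care are checking that the sequence furnished by the definition of $H^{s,p,\d}(\O,\Rn)$ is admissible in Definition~\ref{def: nonlocal gradient 2}\,(b) — which is exactly where boundedness of $\O$ is used — and noting, as a byproduct, that $\diver_\d^s u$ is in fact defined for every $u \in H^{s,p,\d}(\O,\Rn)$.
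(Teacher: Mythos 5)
Your proof is correct and takes essentially the same route as the paper: the algebraic reduction to the single identity $\tr D_\d^s u = \diver_\d^s u$ is identical, and your direct verification of that identity (commuting the trace with the integral via $\tr(a\otimes b)=a\cdot b$ in the smooth case, then passing to general $u$ by density using Definition~\ref{def: nonlocal gradient 2} and the sequence-independence of \cite[Lemma 3.3]{BeCuMo22}) is exactly the argument the paper outsources to the citation \cite[Lemma 3.4]{BeCuMo23}. Your added remark that this shows $\diver_\d^s u$ is well defined for every $u\in H^{s,p,\d}(\O,\Rn)$ is a correct and worthwhile observation.
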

\begin{proof}
The proof of $\tr D_\d^s u = \diver_\d^s u$ was done in \cite[Lemma 3.4]{BeCuMo23}.
Since $\tr A = \tr A^T = \tr A_{\sym}$ for any $A \in \Rnn$, the result follows. 
\end{proof}

\begin{lem}\label{le:divDT}
For all $u \in C^{\infty}_c (\Rn, \Rn)$,
\[
 \diver^s_{\d} (D^s_{\d} u)^T = D^s_{\d} (\diver^s_{\d} u) .
\]
\end{lem}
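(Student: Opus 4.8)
The plan is to write everything out as a double integral over $B(x,\delta)$ (equivalently, using the kernel $\rho_\d$ integrated over $\Rn$, since $w_\d$ is supported in $B(0,\d)$) and match the two sides componentwise. For $u \in C^\infty_c(\Rn,\Rn)$, both sides are genuine $L^\infty$ functions by Proposition \ref{th:Nl parts} and the definitions, so there is no approximation subtlety: everything reduces to manipulating absolutely convergent integrals. First I would fix $x \in \Rn$ and an index $i \in \{1,\dots,n\}$ and compute the $i$-th component of the right-hand side. By definition, $\diver^s_\d u(x)$ is a scalar of the form $c_{n,s}\int \frac{(u(x)-u(y))\cdot(x-y)}{|x-y|^2}\,\rho_\d^{\#}(x-y)\,dy$ where I abbreviate $\rho_\d^{\#}(z) = w_\d(z)/|z|^{n-1+s}$; then $D^s_\d(\diver^s_\d u)(x)$ is another nonlocal gradient of this scalar, giving a double integral in variables $y$ (inner) and $z$ (outer, the variable of the outer $D^s_\d$).

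Next I would compute the $i$-th component of the left-hand side. Here $(D^s_\d u)^T$ is the matrix-valued map whose $(i,j)$ entry is $c_{n,s}\int \frac{(u_j(x)-u_j(y))(x-y)_i}{|x-y|^2}\,\rho_\d^{\#}(x-y)\,dy$, and taking $\diver^s_\d$ of a matrix field contracts the outer difference quotient against the column index. The key structural point to exploit is that the nonlocal divergence of a matrix field $M$ is $\diver^s_\d M(x) = c_{n,s}\int \frac{M(x)-M(y)}{|x-y|}\,\frac{x-y}{|x-y|}\,\rho_\d^{\#}(x-y)\,dy$, i.e. it applies the same scalar difference-quotient kernel to each entry and then sums against the direction vector. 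Once both sides are expressed as double integrals of the schematic form $\int\!\!\int \big[\text{linear combination of }u_k(\cdot)\text{ evaluated at three points}\big]\cdot(\text{product of two direction-vector components})\cdot\rho_\d^{\#}(\cdot)\rho_\d^{\#}(\cdot)$, I would check that the integrands agree. The cleanest way to see this is to note that both operations $\diver^s_\d(D^s_\d u)^T$ and $D^s_\d(\diver^s_\d u)$ amount to applying the same pair of commuting scalar nonlocal first-order operators — one "outer" and one "inner" — to the components of $u$, with the same tensorial contraction pattern; since scalar nonlocal derivatives in independent variables commute (this is just Fubini plus the fact that each is an integral operator in its own variable), the two sides coincide. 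I would phrase this as: for a scalar $v$, $\p^s_{\d,i}\p^s_{\d,j} v = \p^s_{\d,j}\p^s_{\d,i} v$ where $\p^s_{\d,i}$ is the $i$-th component of $D^s_\d$, and then expand both sides of the lemma in terms of these.

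Concretely, the identity I want is, componentwise,
\[
 \sum_{j} \p^s_{\d,j}\big( \p^s_{\d,i} u_j \big) = \p^s_{\d,i} \Big( \sum_j \p^s_{\d,j} u_j \Big),
\]
and since $\sum_j \p^s_{\d,j} u_j = \diver^s_\d u$ and $\sum_j \p^s_{\d,j}(\p^s_{\d,i} u_j) = \big(\diver^s_\d (D^s_\d u)^T\big)_i$, this is exactly the claim. The commutation $\p^s_{\d,j}\p^s_{\d,i} = \p^s_{\d,i}\p^s_{\d,j}$ on $C^\infty_c$ functions follows because each $\p^s_{\d,k}$ is, after a change of variables $y \mapsto x+h$, a convolution-type operator $v \mapsto c_{n,s}\,\mathrm{p.v.}\!\int \frac{v(x)-v(x+h)}{|h|}\frac{-h_k}{|h|}\rho^{\#}_\d(h)\,dh$ acting in the variable $x$, and such operators commute by Fubini once one checks the iterated integral is absolutely convergent — which it is, since applying $D^s_\d$ to a $C^\infty_c$ function yields a Lipschitz (indeed $C^\infty_c$-like, bounded with bounded derivatives) function by the regularity part of Proposition \ref{th:Nl parts} and standard difference-quotient estimates, so the second application again has an absolutely convergent kernel integral. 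The main obstacle, and the only place requiring genuine care, is precisely this absolute-convergence/Fubini justification for interchanging the two integrations: near each singularity $\rho^{\#}_\d$ behaves like $|h|^{-(n-1+s)}$, and a single difference quotient $\frac{v(x)-v(x+h)}{|h|}$ gains one power of $|h|$, leaving $|h|^{-(n-2+s)}$ which is integrable; for the iterated integral one needs that the inner operator's output is Lipschitz in $x$ uniformly enough that the same gain works for the outer integral. I would handle this by invoking that $D^s_\d$ maps $C^\infty_c(\Rn)$ into a space of bounded Lipschitz functions (a consequence of Proposition \ref{th:Nl parts} together with differentiating under the integral sign, which is licit because $w_\d \in C^\infty_c$), so that Tonelli applies to the absolute values and the interchange is justified, after which the computation is purely formal and the two integrands are seen to be identical.
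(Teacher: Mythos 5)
Your proposal is correct and is essentially the paper's own argument: both expand the two sides as absolutely convergent double integrals of the second-order difference $u(x)-u(x-\bar z)-[u(x-z)-u(x-z-\bar z)]$ and conclude by a Fubini/relabeling symmetry in the two integration variables, the only cosmetic difference being that you phrase this as commutation of the scalar operators $\partial^s_{\d,i}$ and symmetrize the intact second difference, whereas the paper truncates to annuli $A(0,\e,\d)$, discards three of the four terms using the vanishing average of the odd kernel, and matches the remaining term via $(a\otimes b)c=(b\cdot c)a$. The one point to state precisely is that Tonelli requires the pointwise bound $|u(x)-u(x-\bar z)-[u(x-z)-u(x-z-\bar z)]|\le C|z||\bar z|$ (a second-order Taylor estimate, exactly the paper's key inequality), rather than merely the Lipschitz regularity of the already-integrated inner output $D^s_\d u$ that you invoke.
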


\begin{proof}
Due to Definition \ref{def: nonlocal gradient} and a change of variables we have that for all $x \in \Rn$,
\begin{align*}
 & \diver^s_{\d} (D^s_{\d} u)^T (x) = c_{n,s} \int_{B(0,\d)} \frac{(D^s_{\d} u)^T (x) - (D^s_{\d} u)^T (x-z)}{|z|^{n+s}} \frac{z}{|z|} w_{\d} (z) \, dz , \\
 & D^s_{\d} (\diver^s_{\d} u) (x) = c_{n,s} \int_{B(0,\d)} \frac{\diver^s_{\d} u (x) - \diver^s_{\d} u (x-z)}{|z|^{n+s}} \frac{z}{|z|} w_{\d} (z) \, dz .
\end{align*}
For the same reason,
\begin{align*}
 & (D^s_{\d} u)^T (x) - (D^s_{\d} u)^T (x-z) \\
 & = c_{n,s} \int_{B(0,\d)}\frac{\bar{z}}{|\bar{z}|} \otimes \frac{u(x) - u(x- \bar{z}) - [u (x-z) - u (x-z-\bar{z})]}{|\bar{z}|^{n+s}} w_{\d} (\bar{z}) \, d\bar{z}
\end{align*}
and
\begin{align*}
 & \diver^s_{\d} u (x) - \diver^s_{\d} u (x-z) \\
 & = c_{n,s} \int_{B(0,\d)} \frac{u (x) - u (x-\bar{z}) - [u (x-z) - u (x-z-\bar{z})]}{|\bar{z}|^{n+s}} \cdot \frac{\bar{z}}{|\bar{z}|} w_{\d} (\bar{z}) \, d\bar{z} ,
\end{align*}
so in order to prove the desired equality we have to show
\begin{equation}\label{eq:intint}
\begin{split}
 & \int_{B(0,\d)} \int_{B(0,\d)}\frac{\bar{z}}{|\bar{z}|} \otimes \frac{u(x) - u(x- \bar{z}) - [u (x-z) - u (x-z-\bar{z})]}{|\bar{z}|^{n+s}} w_{\d} (\bar{z}) \, d\bar{z} \frac{w_{\d} (z)}{|z|^{n+s}} \frac{z}{|z|} \, dz \\
 & = \int_{B(0,\d)} \int_{B(0,\d)} \frac{u (x) - u (x-\bar{z}) - [u (x-z) - u (x-z-\bar{z})]}{|\bar{z}|^{n+s}} \cdot \frac{\bar{z}}{|\bar{z}|} w_{\d} (\bar{z}) \, d\bar{z} \frac{w_{\d} (z)}{|z|^{n+s}} \frac{z}{|z|} \, dz .
\end{split}
\end{equation}
Observe that both integrals above are absolutely convergent.
Indeed, it is enough to notice that, as $u \in C^{\infty}_c (\Rn, \Rn)$, a standard Taylor expansion at $x$ of order $2$ shows that there exists $C>0$ such that for all $x \in \Rn$ and $z, \bar{z} \in B (0, \d)$,
\[
 \left| u(x) - u(x- \bar{z}) - [u (x-z) - u (x-z-\bar{z})] \right| \leq C \left| z \right| \left| \bar{z} \right| .
\]
Therefore, to show \eqref{eq:intint} it suffices to check that for all $0 < \e < \d$,
\begin{equation}\label{eq:A0ed}
\begin{split}
 & \int_{A(0, \e, \d)} \int_{A(0, \e, \d)} \frac{\bar{z}}{|\bar{z}|} \otimes \frac{u(x) - u(x- \bar{z}) - [u (x-z) - u (x-z-\bar{z})]}{|\bar{z}|^{n+s}} w_{\d} (\bar{z}) \, d\bar{z} \frac{w_{\d} (z)}{|z|^{n+s}} \frac{z}{|z|} \, dz \\
 & = \int_{A(0, \e, \d)} \int_{A(0, \e, \d)} \frac{u (x) - u (x-\bar{z}) - [u (x-z) - u (x-z-\bar{z})]}{|\bar{z}|^{n+s}} \cdot \frac{\bar{z}}{|\bar{z}|} w_{\d} (\bar{z}) \, d\bar{z} \frac{w_{\d} (z)}{|z|^{n+s}} \frac{z}{|z|} \, dz ,
\end{split}
\end{equation}
where $A(0, \e, \d) := B(0,\d) \setminus B(0,\e)$. 
As
\[
 \int_{A(0, \e, \d)} \frac{w_\d(y)}{|y|^{n+s}} \frac{y}{|y|} \, dy = 0 ,
\]
equality \eqref{eq:A0ed} is equivalent to
\begin{equation}\label{eq:A0ed2}
\begin{split}
 & \int_{A(0, \e, \d)} \int_{A(0, \e, \d)} \frac{\bar{z}}{|\bar{z}|} \otimes \frac{u (x-z-\bar{z})}{|\bar{z}|^{n+s}} w_{\d} (\bar{z}) \, d\bar{z} \frac{w_{\d} (z)}{|z|^{n+s}} \frac{z}{|z|} \, dz \\
 & = \int_{A(0, \e, \d)} \int_{A(0, \e, \d)} \frac{u (x-z-\bar{z})}{|\bar{z}|^{n+s}} \cdot \frac{\bar{z}}{|\bar{z}|} w_{\d} (\bar{z}) \, d\bar{z} \frac{w_{\d} (z)}{|z|^{n+s}} \frac{z}{|z|} \, dz .
\end{split}
\end{equation}
Using the identity $(a \otimes b) c = (b \cdot c) a$ for all $a, b, c \in \Rn$, as well as Fubini's theorem for the right hand side, we can see that both terms of \eqref{eq:A0ed2} are equal to
\[
\int_{A(0, \e, \d)} \int_{A(0, \e, \d)} \frac{u (x-z-\bar{z})}{|\bar{z}|^{n+s} |z|^{n+s}} \cdot \frac{z}{|z|} \frac{\bar{z}}{|\bar{z}|} w_{\d} (\bar{z}) w_{\d} (z) \, d\bar{z} \, dz
\]
and the proof is complete.
\end{proof}

In order to describe the boundary condition, we recall the set $\O_{-\d} = \{ x \in \O : \dist (x, \O^c) > \d \}$ and define the subspace $H_0^{s,p,\d}(\O_{-\d})$ as the closure of $C_c^{\infty}(\O_{-\d})$ in $H^{s,p,\d}(\O)$.
It is immediate to check that any $u \in H_0^{s,p,\d}(\O_{-\d})$ satisfies $u=0$ a.e.\ in $\O_{\d} \setminus \O_{-\d}$.
We will also use the affine subspace $H^{s,p,\d}_{\id} (\O_{-\d}) = \id + H^{s,p,\d}_0(\O_{-\d})$.

We present an inequality of the type of Sobolev--Poincar\'e, \cite[Sect.\ 6]{BeCuMo22}.
Given $p > 1$ and $0<s<1$ with $sp<n$ we define $p_s^* := \frac{np}{n-sp}$.

\begin{theo}\label{th:PoincareSobolev delta}
Let $1<p<\infty$ and
\[
\begin{cases}
 q\in \left[1, p_s^* \right] & \text{if } sp<n , \\
 q\in[1, \infty) & \text{if } sp=n , \\
 q\in[1,\infty] & \text{if } sp>n .
\end{cases}
\]
Then there exists $C=C(|\O|,n,p,q,s)>0$ such that for all $u \in H_0^{s,p,\d}(\O_{-\d})$,
\[
 \left\| u \right\|_{L^q (\O_{\d})} \leq C \left\| D_\d^s u \right\|_{L^p(\O)} ,
\]
\end{theo}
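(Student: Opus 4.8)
The plan is to deduce the nonlocal Sobolev--Poincar\'e inequality from the corresponding inequality in the fractional case (the Riesz $s$-fractional gradient $D^s$ on all of $\Rn$), using a device that transfers estimates between the two settings. First I would recall that for $u \in C^\infty_c (\Rn)$ one has a pointwise or convolution-type relation between $D^s_\d u$ and $D^s u$: roughly, $D^s_\d u = D^s u - K_\d * (\text{something involving } u)$, or, more precisely, that $D^s_\d u$ and $D^s u$ differ by an operator with an $L^1$ kernel (because the difference of the two radial weights, $\frac{c_{n,s}}{|x|^{n-1+s}}$ versus $c_{n,s}\frac{w_\d(x)}{|x|^{n-1+s}}$, has the singularity cancelled on $B(0,b_0\d)$ and compact support, hence is integrable after accounting for the vanishing moment). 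Concretely, following \cite{BeCuMo22,BeCuMo23,CuKrSc22}, there should be a function (call it the remainder kernel) in $L^1$ such that $D^s u = D^s_\d u + R_\d u$ where $R_\d$ maps $L^q$ into $L^q$ boundedly for every $q \in [1,\infty]$; even better, one has an inverse-type identity expressing $u$ or $D^s u$ through $D^s_\d u$ convolved with an $L^1$ kernel supported away from the origin.

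Second, I would invoke the known fractional Sobolev--Poincar\'e inequality for functions supported in a bounded set: if $u \in C^\infty_c(\O_{-\d})$ then $\|u\|_{L^q(\Rn)} \le C \|D^s u\|_{L^p(\Rn)}$ for the stated range of $q$ (this is the classical estimate for the Riesz fractional gradient, essentially the fractional Sobolev embedding $\dot W^{s,p}\hookrightarrow L^{p_s^*}$ together with the support constraint to absorb lower exponents; see \cite{ShSp15,ShSp17}). Combining this with the first step, for $u \in C^\infty_c(\O_{-\d})$,
\[
 \|u\|_{L^q(\O_\d)} \le \|u\|_{L^q(\Rn)} \le C \|D^s u\|_{L^p(\Rn)} \le C\big( \|D^s_\d u\|_{L^p(\Rn)} + \|R_\d u\|_{L^p(\Rn)}\big) \le C\big( \|D^s_\d u\|_{L^p(\O)} + \|u\|_{L^p(\O_\d)}\big),
\]
where I have used that $D^s_\d u$ and $R_\d u$ are supported in a fixed neighbourhood of $\supp u$ so their $L^p(\Rn)$ norms are controlled by $L^p$ norms over $\O$ and $\O_\d$ respectively (here using $u \in H^{s,p,\d}_0(\O_{-\d})$, so $u$ vanishes outside $\O_{-\d}$ and $D^s_\d u$ outside $\O$). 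Finally, I would remove the $\|u\|_{L^p(\O_\d)}$ term on the right by a standard compactness--contradiction argument: if the inequality $\|u\|_{L^q} \le C\|D^s_\d u\|_{L^p}$ failed, take a sequence $u_j \in H^{s,p,\d}_0(\O_{-\d})$ with $\|u_j\|_{L^q(\O_\d)}=1$ and $\|D^s_\d u_j\|_{L^p(\O)}\to 0$; the combined estimate bounds $\|u_j\|_{H^{s,p,\d}}$, reflexivity gives a weakly convergent subsequence, the compact embedding $H^{s,p,\d}_0(\O_{-\d}) \hookrightarrow\hookrightarrow L^q(\O_\d)$ (which follows from the compactness of the fractional embedding via the same transfer device, or is already recorded in \cite{BeCuMo22}) forces strong $L^q$ convergence to a limit $u$ with $\|u\|_{L^q(\O_\d)}=1$ but $D^s_\d u = 0$; and $D^s_\d u = 0$ on $\O$ together with the boundary condition forces $u \equiv 0$ (for instance, $D^s u = R_\d u$ would then be too regular unless $u=0$, or one uses that the only function in $H^{s,p,\d}_0$ with vanishing nonlocal gradient is zero), a contradiction. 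Passing from $C^\infty_c(\O_{-\d})$ to all of $H^{s,p,\d}_0(\O_{-\d})$ is then immediate by density and continuity of both sides.

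The main obstacle I anticipate is making the first step fully rigorous: establishing the precise relation between $D^s_\d$ and $D^s$ with an honest $L^1$ (or bounded-on-$L^q$) remainder, and in particular controlling supports so that the global $L^p(\Rn)$ quantities reduce to integrals over $\O$ and $\O_\d$. This is exactly the ``translation procedure'' referred to in the introduction (attributed to \cite{BeCuMo23,CuKrSc22,KrSc22}), so in the write-up I would quote the relevant lemma from those references rather than reprove it. A secondary point requiring care is the endpoint $q = p_s^*$ when $sp<n$, where no compactness is available; there the inequality must come directly from the combined estimate plus the sharp fractional embedding, and the removal of the lower-order term at that exponent should instead be handled by a scaling/homogeneity observation or by first proving it for $q<p_s^*$ and then using that $\|D^s_\d u\|_{L^p}$ already controls $\|u\|_{L^{p_s^*}}$ through the fractional inequality directly (since the $R_\d u$ term is lower order and can be re-absorbed using a subcritical $q$). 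I would organize the proof to treat the subcritical and critical exponents in that order.
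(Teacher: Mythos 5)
The paper itself contains no proof of this theorem: it is quoted from \cite[Sect.\ 6]{BeCuMo22}, where it is derived from the nonlocal fundamental theorem of calculus --- the representation $u = V^s_\d * D^s_\d u$ with a kernel $V^s_\d$ that behaves like the Riesz potential $I_s$ near the origin --- followed by potential (Hardy--Littlewood--Sobolev/Hedberg-type) estimates; that argument yields the whole range of $q$, including the critical exponent, in one stroke and without any compactness. Your route is genuinely different: you compare $D^s_\d$ with the Riesz gradient $D^s$, import the fractional Sobolev--Poincar\'e inequality, and then remove the lower-order term by a Rellich-type contradiction argument. This is in the spirit of the equivalence $H^{s,p}_0(\O_{-\d})=H^{s,p,\d}_0(\O_{-\d})$ of \cite[Lemma 2.16]{CuKrSc22} quoted in Section 8 of the paper, and it buys the theorem as a corollary of known fractional results, at the price of an extra compactness step and a two-stage treatment of the critical exponent (your plan of first proving the case $q=p$ and then feeding it back is fine).

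Two points in your write-up are not yet proofs. First, the claimed cancellation of the singularity in the difference kernel is not automatic under this paper's normalization: $w_\d\equiv a_0$ near the origin with $a_0$ fixed by condition (e), not necessarily $a_0=1$, so you should compare $D^s_\d$ with $a_0 D^s$ (or renormalize); also $R_\d u$ is \emph{not} compactly supported (its tail decays like $|x|^{-(n+s)}$) --- what is true, and all you actually use, is the bound $\|R_\d u\|_{L^p(\Rn)}\le C\|u\|_{L^p(\Rn)}=C\|u\|_{L^p(\O_\d)}$ coming from the finite-difference structure and an $L^1$ tail kernel. Second, and more seriously, the contradiction step needs the implication: $u\in H^{s,p,\d}_0(\O_{-\d})$ and $D^s_\d u=0$ in $\O$ imply $u=0$. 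Neither of your two suggested justifications is a proof (one is essentially circular, the other vague). It does hold --- e.g.\ one checks that $Q^s_\d*u\in W^{1,p}(\Rn)$ has vanishing weak gradient and compact support, hence vanishes, and then the strict positivity of the Fourier transform of $Q^s_\d$ forces $u=0$ --- but that positivity (equivalently, the existence of the inverse kernel $V^s_\d$) is precisely the nontrivial input behind the fundamental theorem of calculus of \cite{BeCuMo22}; once you invoke it, the direct representation proof of the inequality is available and shorter. So the proposal is viable, provided you turn these two ingredients (the $D^s$-versus-$D^s_\d$ comparison lemma and the injectivity/compactness statements) into explicit citations or proofs rather than assumptions.
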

%\begin{proof}
%The case $sp<n$ corresponds to \cite[Th.\ 6.1]{BeCuMo22}.
%The case $sp=n$ is a consequence of \cite[Th.\ 6.4]{BeCuMo22} (or else of the previous case and Proposition \ref{pr:inclusionH}).
%The case $sp>n$ is a particular case of \cite[Th.\ 6.3]{BeCuMo22}.
%\end{proof}

%An important tool in the subsequent analysis is given by the following result \cite[Lemma 4.2]{BeCuMo23}, which states that every nonlocal gradient is in fact a gradient. We first recall from \cite[Lemma 4.2]{BeCuMo22} that there exists a radial $L^1 (\Rn)$ function $Q^s_{\d}$ with support in $B(0,\d)$, strictly positive in the interior of its support, radially decreasing and such that
%\[
% \nabla Q_\d^s(x)=- (n-1+s) \frac{\rho_\delta{(x)}}{|x|}\frac{x}{|x|} , \qquad x \in \Rn \setminus \{ 0 \} .
%\]
%
%We will do convolutions of $Q^s_{\d}$ with functions defined in $\O_{\d}$.
%Thus, with a small abuse of notation, given $u : \O_{\d} \to \R$ we will write $Q^s_{\d} * u$ as the function defined in $\O$ by
%\[
% Q^s_{\d} * u (x) = \int_{B(x,\d)} Q^s_{\d} (x-y) u (y) \, dy ,
%\]
%whenever the integral is well defined.
%%In truth, $Q^s_{\d} * u$ is the restriction to $\O$ of the convolution $Q^s_{\d} * \bar{u}$, where $\bar{u} : \Rn \to \R$ is the extension by zero of $u$.
%The result interesting for us is the following (\cite[Lemma 4.2]{BeCuMo23}).
%
%\begin{lem}\label{le:DsdD}
%Let $1 \leq p < \infty$.
%For all $u \in H^{s,p,\d} (\O)$,
%\[
% D^s_{\d} u = D (Q^s_{\d} * u) \quad \text{in } \O.
%\]
%\end{lem}

An important tool in the subsequent analysis is given by the following result from \cite[Lemma 4.2]{BeCuMo23} and \cite[Prop.\ 2.13]{CuKrSc22} (see also the earlier results \cite[Lemma 4.2 and Prop.\ 4.3]{BeCuMo22}), which states that every nonlocal gradient is in fact a gradient.

\begin{theo}\label{theo:DsdD} Let $1 \leq p < \infty$. There exists a positive radial function $Q_{\d}^s\in L^1(\Rn)$, with $\supp Q_{\d}^s\subset B(0,\delta)$, and strictly positive in the interior of its support, such that for all $u \in H^{s,p,\d} (\O)$, we have that $Q^s_\d*u \in W^{1,p}(\O)$ and
\[
 D^s_{\d} u = D (Q^s_{\d} * u) \quad \text{in } \O.
\]
If, in addition, $u \in C^{\infty}_c (\O_{-\d}, \Rn)$ then $D^s_{\d} u = Q^s_{\d} * D u$.
\end{theo}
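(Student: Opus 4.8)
The plan is to exhibit $Q^s_\d$ explicitly and reduce everything to the known representation of the Riesz fractional gradient as a composition of the Riesz potential with the full gradient. Recall (from \cite{ShSp15,ShSp17}) that on $\Rn$ one has $D^s u = I_{1-s} D u = D (I_{1-s} * u)$, where $I_{1-s}$ is the Riesz potential kernel, whose Fourier multiplier is $|\xi|^{-(1-s)}$. The truncated gradient $D^s_\d$ differs from $D^s$ only through the cut-off $w_\d$: writing $\rho_\d(x) = \frac{w_\d(x)}{\gamma(1-s)|x|^{n-1+s}}$, the kernel of $D^s_\d$ is $(n-1+s)\,\rho_\d(x)\frac{x}{|x|}$, i.e.\ it is (up to the normalising constants) $-\nabla$ of the radial function $\psi_\d$ determined by $\psi_\d'(r) = -(n-1+s)\frac{w_\d(r)}{\gamma(1-s) r^{n-1+s}}$. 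First I would verify, by an integration by parts on $\Rn \setminus B(0,\e)$ and letting $\e\to 0$ (the boundary terms vanish because $w_\d$ is smooth and compactly supported, killing the contribution at $\delta$, while the singularity at $0$ is integrable after the cancellation coming from oddness of $\frac{z}{|z|}$), that for $u \in C^\infty_c(\Rn)$
\[
 D^s_\d u(x) = \int_{B(0,\d)} \nabla\psi_\d(z)\,\bigl(u(x-z)-u(x)\bigr)\,dz = \bigl(\nabla\psi_\d * u\bigr)(x) = D\bigl(\psi_\d * u\bigr)(x),
\]
where the last equality uses $\int \nabla\psi_\d = 0$. This identifies $Q^s_\d := \psi_\d$ as a candidate; one then checks $Q^s_\d \in L^1(\Rn)$ (the singularity $|z|^{-(n-1+s)}$ integrated once in $r$ gives $r^{-(n-2+s)}$, integrable near $0$ since $s<1$), that it is radial, supported in $\overline{B(0,\d)}$, and strictly positive in $B(0,\d)$ because $w_\d > 0$ there forces $\psi_\d' < 0$, hence $\psi_\d(r) > \psi_\d(\delta) = 0$ for $r < \delta$. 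Positivity is the one place where I would be slightly careful: it needs $w_\d$ to be genuinely positive on all of $B(0,\d)$ up to the boundary, which follows from properties c) and d) of the cut-off (it equals $a_0 > 0$ on $B(0,b_0\delta)$ and is radially decreasing, but could vanish before $\delta$; so "strictly positive in the interior of its support" — with support possibly a smaller ball — is the correct statement, matching the wording of the theorem).

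Having the identity $D^s_\d u = D(Q^s_\d * u)$ for $u \in C^\infty_c(\Rn)$, I would extend it to $u \in H^{s,p,\d}(\O)$ by density: take $u_j \in C^\infty_c(\Rn)$ with $u_j \to u$ in $L^p(\O_\d)$ and $D^s_\d u_j \to D^s_\d u$ in $L^p(\O)$ (Definition \ref{de:Hspd} and Definition \ref{def: nonlocal gradient 2}). Since $Q^s_\d \in L^1$ and is supported in $B(0,\d)$, Young's inequality gives $Q^s_\d * u_j \to Q^s_\d * u$ in $L^p(\O)$, and $D(Q^s_\d * u_j) = D^s_\d u_j \to D^s_\d u$ in $L^p(\O)$; so $Q^s_\d * u$ has a weak gradient in $L^p(\O)$ equal to $D^s_\d u$, i.e.\ $Q^s_\d * u \in W^{1,p}(\O)$ with $D(Q^s_\d * u) = D^s_\d u$. (Here one uses that the weak gradient is closed under $L^p$ convergence of functions and of gradients.) Finally, for $u \in C^\infty_c(\O_{-\d},\Rn)$ one has $u(x-z)$ supported, in $z$, away from forcing cancellation issues, and since convolution commutes with differentiation on smooth compactly supported functions, $D^s_\d u = \nabla\psi_\d * u = \psi_\d * \nabla u = Q^s_\d * Du$ directly; the support condition $\supp u \subset \O_{-\d}$ guarantees $Q^s_\d * Du$ is computed entirely from values of $Du$ inside $\O$, consistent with the restriction to $\O$.

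The main obstacle is the rigorous justification of the first displayed identity — turning the singular integral operator $D^s_\d$ into the honest convolution $D(Q^s_\d * u)$. The subtlety is that $\nabla\psi_\d$ is not in $L^1$ (it decays like $|z|^{-(n-1+s)}$ near $0$), so $\nabla\psi_\d * u$ must be interpreted via the cancellation $\int_{B(0,\d)}\nabla\psi_\d(z)\,dz = 0$ (which itself follows from the divergence theorem applied to the radial field $\psi_\d$, using $\psi_\d \in C^1$ away from $0$ and $\psi_\d(\delta)=0$), writing $D^s_\d u(x) = \int \nabla\psi_\d(z)(u(x-z)-u(x))\,dz$ with an integrable integrand by the Lipschitz bound on $u$. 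Once this is set up — essentially a careful $\e\to 0$ argument with the annulus $A(0,\e,\d)$ exactly as in the proof of Lemma \ref{le:divDT} above — the rest is soft functional analysis. I would also remark that this is where the normalisation e) of $w_\d$ could be invoked if one wants $\int Q^s_\d$ to take a specific value, though it is not needed for the statement as given.
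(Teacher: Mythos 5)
A preliminary remark: the paper does not actually prove Theorem \ref{theo:DsdD}; it imports it from \cite{BeCuMo22,BeCuMo23,CuKrSc22}, so there is no in-paper argument to compare with. Your strategy---write the truncated kernel as the gradient of an explicit radial profile vanishing at radius $\d$, convert $D^s_\d u$ into $D(\psi_\d * u)$ by integration by parts over annuli $A(0,\e,\d)$, then pass to general $u \in H^{s,p,\d}(\O)$ by density and closedness of the weak gradient---is essentially the construction used in those references, and the soft steps are all sound: Young's inequality with $\supp Q^s_\d\subset B(0,\d)$, the limit argument giving $Q^s_\d*u\in W^{1,p}(\O)$ with $D(Q^s_\d*u)=D^s_\d u$, positivity on the interior of the support from $\psi_\d'\le 0$ and $\psi_\d(\d)=0$, and $D(Q^s_\d*u)=Q^s_\d*Du$ for smooth compactly supported $u$.

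There is, however, a concrete slip in the definition of the profile, and as written it falsifies your key displayed identity. The kernel multiplying the plain difference $u(x)-u(x-z)$ in Definition \ref{def: nonlocal gradient} is $c_{n,s}\,w_\d(z)\,|z|^{-(n+s)}\,z/|z|$ (the extra factor $|z|^{-1}$ from the difference quotient must be absorbed), so the profile must satisfy $\psi_\d'(r)=-c_{n,s}\,w_\d(r)\,r^{-(n+s)}$, i.e.\ $Q^s_\d(x)=c_{n,s}\int_{|x|}^{\d} w_\d(t)\,t^{-(n+s)}\,dt$, not $\psi_\d'(r)=-c_{n,s}\,w_\d(r)\,r^{-(n-1+s)}$ as you wrote; with your exponent, $-\nabla\psi_\d(z)=c_{n,s}\,w_\d(z)\,|z|^{-(n-1+s)}\,z/|z|$ and the first display is not $D^s_\d u$. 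The slip surfaces as an internal inconsistency: you later assert that $\nabla\psi_\d\sim|z|^{-(n-1+s)}$ is not in $L^1$, but that power \emph{is} locally integrable; it is the correct singularity $|z|^{-(n+s)}$ that is not, and that is exactly why the cancellation $\int_{A(0,\e,\d)}\nabla\psi_\d=0$ and the subtraction of $u(x)$ on the inner sphere (where $\psi_\d(\e)\,\e^{n-1}\sim\e^{-s}$ alone does not vanish) are needed---steps you do gesture at correctly. With the corrected profile everything goes through: $Q^s_\d\in L^1(\Rn)$ since $\psi_\d(r)\lesssim r^{-(n+s-1)}$ and $\int_0 r^{-(n+s-1)}r^{n-1}\,dr<\infty$ for $s<1$, and as a consistency check the normalisation \ref{item:wnormalization} yields $\int_{\Rn}Q^s_\d=1$, in agreement with the computation of Section \ref{eq:derivativelinear}. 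In short: right approach, one wrong power of $r$ to fix, together with the attendant corrections to the integrability claims.
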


As we will do convolutions of $Q^s_{\d}$ with functions defined in $\O_{\d}$, with a small abuse of notation, given $u : \O_{\d} \to \R$ we will write $Q^s_{\d} * u$ as the function defined in $\O$ by
\[
 Q^s_{\d} * u (x) = \int_{B(x,\d)} Q^s_{\d} (x-y) u (y) \, dy ,
\]
whenever the integral is well defined.

\section{Derivative of an affine map}\label{eq:derivativelinear}

In this section we show that the nonlocal derivative of an affine map is the associated matrix of the linear map.

\begin{prop}
Let $F \in \Rnn$ and $a \in \Rn$.
Define $u : \O_\d \to \Rn$ as $u (x) = F x + a$.
Then $u \in H^{s,p} (\O, \Rn)$ and $D^s_{\d} u (x) = F$ for all $x \in \O$.
\end{prop}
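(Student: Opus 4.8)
The plan is to verify the claim directly from the definition of the nonlocal gradient, using the normalization condition~\ref{item:wnormalization} on the cut-off weight. First I would observe that although $u(x) = Fx + a$ is not compactly supported, it is smooth, so one can either work with a sequence of compactly supported smooth truncations agreeing with $u$ on a neighbourhood of $\O_\d$ (invoking the locality of $D^s_\d$, i.e. that $D^s_\d u(x)$ depends only on the values of $u$ on $B(x,\d)$, together with Definition~\ref{def: nonlocal gradient 2} and the independence-of-sequence statement from \cite{BeCuMo22}), or simply note that the integral defining $D^s_\d u(x)$ converges absolutely for each fixed $x \in \O$ because $\rho_\d \in L^1(\Rn)$ and $u$ is Lipschitz, so the pointwise formula in Definition~\ref{def: nonlocal gradient}\ref{item:Dsdu} applies verbatim.

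The core computation is then short. For $x \in \O$, after the change of variables $y = x - z$,
\[
 D^s_\d u(x) = c_{n,s} \int_{B(0,\d)} \frac{u(x) - u(x-z)}{|z|} \otimes \frac{z}{|z|} \frac{w_\d(z)}{|z|^{n-1+s}} \, dz .
\]
Since $u(x) - u(x-z) = Fz$, the integrand becomes $(Fz) \otimes z \, \frac{w_\d(z)}{|z|^{n+1+s}}$, i.e. the matrix $F \left( \frac{z \otimes z}{|z|^{2}} \right) \frac{w_\d(z)}{|z|^{n-1+s}}$. Now I would use that $w_\d$ is radial: writing $z = r\theta$ with $r = |z|$ and $\theta \in S^{n-1}$, the angular average of $\theta \otimes \theta$ over the sphere equals $\frac{1}{n} I$, so
\[
 \int_{B(0,\d)} \frac{z \otimes z}{|z|^{2}} \frac{w_\d(z)}{|z|^{n-1+s}} \, dz
 = \frac{1}{n} I \int_{B(0,\d)} \frac{w_\d(z)}{|z|^{n-1+s}} \, dz .
\]
By condition~\ref{item:wnormalization}, the remaining scalar integral equals $n/c_{n,s}$, so the whole expression is $\frac{1}{c_{n,s}} I$, and multiplying by $c_{n,s}$ and by $F$ on the left yields $D^s_\d u(x) = F$, as claimed. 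Finally, $u \in H^{s,p,\d}(\O,\Rn)$ because $u \in L^p(\O_\d,\Rn)$ (it is continuous on the bounded set $\O_\d$) and $D^s_\d u \equiv F \in L^p(\O,\Rn)$, and a truncation argument shows $u$ is the $H^{s,p,\d}$-limit of $C^\infty_c$ functions.

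The only mildly delicate point — and the one I would be most careful about — is the justification that the pointwise formula applies to the non-compactly-supported map $u$, and correspondingly that $u$ genuinely lies in the closure of $C^\infty_c(\Rn)$ defining $H^{s,p,\d}(\O)$: one must exhibit truncations $u_j \in C^\infty_c(\Rn)$ with $u_j \to u$ in $L^p(\O_\d)$ and $D^s_\d u_j \to F$ in $L^p(\O)$, which follows from taking $u_j = \chi_j u$ with $\chi_j$ a smooth cutoff equal to $1$ on a $\d$-neighbourhood of $\O_\d$, so that $D^s_\d u_j = D^s_\d u = F$ on $\O$ for all large $j$ by locality. Everything else is a routine change of variables plus the spherical symmetry of $w_\d$ and the normalization~\ref{item:wnormalization}; there is no real obstacle.
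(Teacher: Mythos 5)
Your proposal is correct and follows essentially the same route as the paper: reduce to the pointwise formula via a compactly supported smooth extension of the affine map, change variables, and use the radial symmetry of $w_\d$ together with the normalization \ref{item:wnormalization} to identify the matrix integral as $\frac{1}{c_{n,s}}I$. The only differences are cosmetic — you compute the angular integral via the spherical average of $\theta\otimes\theta$ where the paper uses reflection and rotation arguments componentwise, and you spell out the truncation argument for membership in $H^{s,p,\d}(\O,\Rn)$ that the paper leaves as ``clear from the definition.''
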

\begin{proof}
In order to calculate $D^s_{\d} u$ using Definition \ref{def: nonlocal gradient}\ref{item:Dsdu} we notice that there exists a $C^{\infty}_c(\Rn, \Rn)$ extension of $u$, which will still be called $u$.
A change of variables shows that, for all $x \in \O$,
\[
  D_\d^s u (x) = c_{n,s} \int_{B(x,\d)}\frac{F(x-y)}{|x-y|^{n+s}}\otimes \frac{x-y}{|x-y|} w_\d(x-y)\, dy = c_{n,s} F \int_{B(0,\d)}\frac{z \otimes z}{|z|^{n+s+1}} w_\d(z)\, dz .
 \]
Let $1 \leq i, j \leq n$.
If $i \neq j$, we have 
\[
 \int_{B(0,\d)}\frac{z_i z_j}{|z|^{n+s+1}} w_\d(z)\, dz = 0
\]
as can be seen by performing a reflection along the $x_i$ axis.
On the other hand, performing a rotation taking the $x_i$ axis onto the $x_1$ axis, we can see that 
\[
 \int_{B(0,\d)}\frac{z_i^2}{|z|^{n+s+1}} w_\d(z)\, dz = \int_{B(0,\d)}\frac{z_1^2}{|z|^{n+s+1}} w_\d(z)\, dz ,
\]
so
\[
 \int_{B(0,\d)}\frac{z_i^2}{|z|^{n+s+1}} w_\d(z)\, dz = \frac{1}{n} \sum_{j=1}^n \int_{B(0,\d)}\frac{z_j^2}{|z|^{n+s+1}} w_\d(z)\, dz = \frac{1}{n} \int_{B(0,\d)}\frac{1}{|z|^{n+s-1}} w_\d(z)\, dz = \frac{1}{c_{n,s}} ,
\]
thanks to the normalization \ref{item:wnormalization} of Section \ref{se:functional}.
All in all,
\[
 \int_{B(0,\d)}\frac{z \otimes z}{|z|^{n+s+1}} w_\d(z)\, dz = \frac{1}{c_{n,s}} I ,
\]
and, hence, $D_\d^s u (x) = F$. Now, the fact $u \in H^{s,p} (\O, \Rn)$ is clear from Definition \ref{de:Hspd}.
\end{proof}

\section{Linearization of the nonlocal equations}\label{se:linearization}
 
In \cite[Th.\ 8.2]{BeCuMo22} and \cite[Th.\ 6.2]{BeCuMo23} we showed the Euler--Lagrange equation corresponding to the minimization problem.
In our case, if the body force is $f : \O \to \Rn$ and the boundary condition is (for simplicity) $u |_{\O_{\d} \setminus \O_{-\d}} = \id$, the total energy of $u \in H^{s,p,\d}_{\id} (\O_{-\d}, \Rn)$ is
\[
 \int_{\O} W (x, D^s_{\d} u) \, dx - \int_{\O} f \cdot u \, dx .
\]
Thus, if we denote by $T_R (x,F) = D_F W(x,F)$ the Piola--Kirchhoff stress tensor, the Euler--Lagrange equation is
\begin{equation}\label{eq:EL}
 - \diver^s_{\d} T_R (x, D^s_{\d} u (x)) = f(x) , \qquad x \in \O_{-\d} .
\end{equation}
The (formal) linearization of this equation is totally analogous to that in the classical case
\[
 - \diver T_R (x, D u (x)) = f(x) , \qquad x \in \O ;
\]
see, e.g., \cite[Ch.\ X]{Gurtin81}, \cite[Ch.\ 4]{MaHu94} or \cite[Ch.\ 52]{GuFrAn10}.
As the operator $\diver^s_{\d}$ is linear, the main issue in the linearization of \eqref{eq:EL} lies on $T_R (x, D^s_{\d} u (x))$, but, then, for the linearization of $T_R (x, D^s_{\d} u (x))$ it is enough to substitute $D u$ of the classical theory by $D^s_{\d} u$.

To be more precise, and assuming the usual simplification of the classical theory, we linearize around the identity map and assume that $W$ does not depend on $x$.
The identity is supposed to be stress-free, so $T_R (I) = 0$.
We write $u = \id + v$, where $v$ is the displacement.
For the linearization process, we assume that $D^s_{\d} v$ is small.
We denote by $C$ the elasticity tensor, with components
\[
 c_{ijkl} = \frac{\partial^2 W}{\partial F_{ij} \partial F_{kl}} (I) , \qquad 1 \leq i, j, k, l \leq n .
\]
Thus, by the classical theory, the linearization of $T_R (D^s_{\d} u)$ is $C D^s_{\d} v$.
Therefore, the linearization of \eqref{eq:EL} is
\[
 - \diver^s_{\d} \left( C D^s_{\d} v \right) (x) = f(x) , \qquad x \in \O_{-\d} 
\]
with boundary conditions $v|_{\O_{\d} \setminus \O_{-\d}} = 0$.
As in the classical case,  $C D^s_{\d} v =  C D^s_{\d,\sym} v$.

In conclusion, the strong form of the equations of nonlocal linear elasticity are
\begin{equation}\label{eq:lineareq}
 \begin{cases}
 - \diver^s_{\d} \left( C D^s_{\d,\sym} v \right) = f & \text{ in } \O_{-\d} , \\
 v = 0 & \text{ in } \O_{\d} \setminus \O_{-\d} ,
 \end{cases}
\end{equation}
while its weak form is:
\begin{equation}\label{eq:weaklinearnonlocal}
 \text{Find } v \in H^{s,2,\d}_0 (\O_{-\d}, \Rn) \text{ such that } \int_{\O} C D^s_{\d,\sym} v : D^s_{\d,\sym} w = \int_{\O} f \cdot w , \quad \forall w \in H^{s,2,\d}_0 (\O_{-\d}, \Rn).
\end{equation}
This weak form is obtained in the usual fashion, multiplying by a test function, and then performing (nonlocal) integration by parts with the use of Proposition \ref{th:Nl parts}.

Of course, the process of linealization has been purely formal.
A rigorous derivation by $\G$-convergence would entail an adaptation of \cite{DaNePe02} and, in particular, the rigidity estimates of \cite{FrJaMu02}.

When $W$ is isotropic, the elasticity tensor takes the simpler expression
\[
 C e = 2 \mu e + \lambda (\tr e) I , \qquad e \in \Rnn_{\sym} ,
\]
where $\mu, \lambda \in \R$ are the Lam\'e moduli.
Positive definiteness of $C$ entails $\mu>0$ and $2 \mu + n \lambda >0$.
In this case,
\begin{equation}\label{eq:CDv}
 C D^s_{\d,\sym} v = 2 \mu D^s_{\d,\sym} v + \lambda (\tr D^s_{\d,\sym} v) I = 2 \mu D^s_{\d,\sym} v + \lambda (\diver^s_{\d} v) I
\end{equation}
and
\[
 C D^s_{\d,\sym} v : D^s_{\d,\sym} w = 2 \mu  D^s_{\d,\sym} v : D^s_{\d,\sym} w + \lambda \diver^s_{\d} v \diver^s_{\d} w ,
\]
where we have used Lemma \ref{le:trD}.
Therefore, the weak form of the equations of isotropic nonlocal linear elasticity is to find $v \in H^{s,2,\d}_0 (\O_{-\d}, \Rn)$ such that for all $w \in H^{s,2,\d}_0 (\O_{-\d}, \Rn)$,
\[
 \int_{\O} \left( 2 \mu  D^s_{\d,\sym} v : D^s_{\d,\sym} w + \lambda \diver^s_{\d} v \diver^s_{\d} w \right) = \int_{\O} f \cdot w .
\]

The strong form requires a longer argument, which assumes that $v \in C^{\infty}_c (\O_{-\d}, \Rn)$.
Starting from \eqref{eq:CDv}, we note that
\[
 \diver^s_{\d} \left( C D^s_{\d,\sym} v \right) = 2 \mu \diver^s_{\d} \left( D^s_{\d,\sym} v \right) + \lambda \diver^s_{\d} \left( (\diver^s_{\d} v) I \right),
\]
Now, thanks to \cite[Lemma 3.4 and Def.\ 3.1]{BeCuMo23}
\[
 \diver^s_{\d} \left( (\diver^s_{\d} v) I \right) = D^s_{\d} (\diver^s_{\d} v) .
\]
At this stage, we ought to define the nonlocal Laplacian $\Delta^s_{\d}$.
\begin{definition}\label{de:nonlocallaplacian}
 For $u \in C^{\infty}_c (\Rn)$ or $u \in C^{\infty}_c (\Rnn)$, we define 
\[
 \Delta^s_{\d} u = \diver^s_{\d} D^s_{\d} u .
\]
\end{definition}
It is not the scope of this work to study this operator, not even to write down a pointwise definition of it involving only one integral, in the spirit of Definition \ref{def: nonlocal gradient}.
We just define $\Delta^s_{\d}$ in this way for parallelism with the classical case and to continue with the analysis of the linear isotropic equation.

With Definition \ref{de:nonlocallaplacian} and Lemma \ref{le:divDT} we have 
\[
 \diver^s_{\d} \left( D^s_{\d,\sym} v \right) = \frac{\diver^s_{\d} \left( D^s_{\d} v \right) + \diver^s_{\d} \left( D^s_{\d} v \right)^T }{2} = \frac{\Delta^s_{\d} u +  D^s_{\d} (\diver^s_{\d} u)}{2} .
\]
All in all, we have that
\[
 \diver^s_{\d} \left( C D^s_{\d,\sym} v \right) = \mu \Delta^s_{\d} v + (\mu + \lambda) D^s_{\d} (\diver^s_{\d} v) ,
\]
and, hence, the strong form of the nonlocal linear equations in the isotropic case are
\begin{equation*}%\label{eq:lineareqiso}
 \begin{cases}
 - \mu \Delta^s_{\d} v - (\mu + \lambda) D^s_{\d} (\diver^s_{\d} v) = f & \text{ in } \O_{-\d} , \\
 v = 0 & \text{ in } \O_{\d} \setminus \O_{-\d} .
 \end{cases}
\end{equation*}

\section{Existence and uniqueness of solutions}\label{se:existence}

We present Korn's inequality in $H^{s,p,\d}$.

\begin{prop}\label{prop:Korn}
Let $1 < p < \infty$.
Then there exists $c>0$ such that for all $u \in H^{s,p,\d} (\O, \Rn)$,
\[
 \left\| D^s_{\d,\sym} u \right\|_{L^p (\O)} \geq c \left\| D^s_{\d} u \right\|_{L^p (\O)} .
\]
\end{prop}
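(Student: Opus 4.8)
The plan is to transfer the classical Korn inequality to the nonlocal setting by means of Theorem~\ref{theo:DsdD}, which identifies $D^s_\d u$ with an ordinary gradient. Set $U := Q^s_\d * u$. That theorem gives $U \in W^{1,p}(\O,\Rn)$ and $D^s_\d u = DU$ a.e.\ in $\O$; taking symmetric parts of this matrix identity, $D^s_{\d,\sym} u = D_{\sym} U$ a.e.\ in $\O$. Hence the asserted estimate is exactly Korn's inequality $\|D_{\sym}U\|_{L^p(\O)} \geq c\,\|DU\|_{L^p(\O)}$ for the field $U$. As there is no lower-order term on the right, the version to invoke is Korn's first inequality, valid on an arbitrary bounded open set for fields in $W^{1,p}_0(\O,\Rn)$ (extend by zero and use the whole-space bound $\|Dv\|_{L^p(\Rn)} \leq C_{n,p}\|D_{\sym}v\|_{L^p(\Rn)}$ for $v \in C^\infty_c(\Rn,\Rn)$, $1<p<\infty$, which follows from writing $Dv$ via second-order Riesz transforms of $D_{\sym}v$). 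So the task reduces to checking that $Q^s_\d * u$ lies in $W^{1,p}_0(\O,\Rn)$, and this is where the homogeneous boundary condition must be used: the estimate is to be read for $u$ in the subspace $H^{s,p,\d}_0(\O_{-\d},\Rn)$ relevant for the existence theory of Section~\ref{se:existence}. Some restriction of this kind is genuinely necessary: by the Proposition in Section~\ref{eq:derivativelinear}, the affine field $x\mapsto Fx$ with $F$ skew-symmetric belongs to $H^{s,p,\d}(\O,\Rn)$, with $D^s_\d(Fx)\equiv F$ but $D^s_{\d,\sym}(Fx)\equiv 0$.

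With this reduction in hand, I would argue by density. Take $u_j \in C^\infty_c(\O_{-\d},\Rn)$ with $u_j \to u$ in $H^{s,p,\d}(\O)$. Since $Q^s_\d \in L^1(\Rn)$ is supported in $\overline{B(0,\d)}$ and $\supp u_j$ is a compact subset of $\O_{-\d}$, the convolution $Q^s_\d * u_j$ is of class $C^\infty$ with support contained in the compact set $\supp u_j + \overline{B(0,\d)}$, which lies in $\O$ because $\O_{-\d}+\overline{B(0,\d)}\subset\O$; thus $Q^s_\d * u_j \in C^\infty_c(\O,\Rn)$. By Theorem~\ref{theo:DsdD}, $D^s_\d u_j = D(Q^s_\d * u_j)$ on $\O$, so $D^s_{\d,\sym} u_j = D_{\sym}(Q^s_\d * u_j)$. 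Classical Korn's inequality on $\O$, with a constant $C=C(n,p,\O)$, gives
\[
 \|D^s_\d u_j\|_{L^p(\O)} = \|D(Q^s_\d * u_j)\|_{L^p(\O)} \leq C\,\|D_{\sym}(Q^s_\d * u_j)\|_{L^p(\O)} = C\,\|D^s_{\d,\sym} u_j\|_{L^p(\O)} .
\]
Letting $j\to\infty$: by Definition~\ref{de:Hspd}, $D^s_\d u_j \to D^s_\d u$ in $L^p(\O,\Rnn)$, hence also $D^s_{\d,\sym}u_j \to D^s_{\d,\sym}u$ in $L^p(\O)$, and the inequality passes to the limit, giving the claim with $c = 1/C$.

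The main obstacle is precisely the point flagged above: Korn's inequality without a lower-order term forces the fields to vanish near $\p\O$, so the estimate has to be invoked with the boundary condition built in. For a general $u \in H^{s,p,\d}(\O,\Rn)$ the same scheme, with Korn's second inequality on a bounded (say, Lipschitz) $\O$ in place of the first, only yields the weaker
\[
 \|D^s_\d u\|_{L^p(\O)} \leq C\bigl(\|D^s_{\d,\sym}u\|_{L^p(\O)} + \|Q^s_\d * u\|_{L^p(\O)}\bigr) .
\]
Everything else — the support computation for $Q^s_\d * u_j$, the identity $D^s_{\d,\sym} u_j = D_{\sym}(Q^s_\d * u_j)$ obtained by symmetrizing the matrix equality of Theorem~\ref{theo:DsdD}, and stability of the inequality under the $L^p$-limit — is routine.
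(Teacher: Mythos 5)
Your core reduction is the same as the paper's: Theorem \ref{theo:DsdD} turns $D^s_{\d}u$ into the ordinary gradient of $Q^s_{\d}*u$, so everything hinges on a classical Korn inequality for that field. The difference lies in which Korn inequality is invoked, and your caution exposes a genuine issue rather than a technicality. The paper's one-line proof applies $\left\| D_{\sym} v \right\|_{L^p (\O)} \geq c \left\| D v \right\|_{L^p (\O)}$ to an arbitrary $v \in W^{1,p}(\O,\Rn)$; no such unconstrained inequality exists (skew-affine fields annihilate the left-hand side), and accordingly Proposition \ref{prop:Korn} as stated on all of $H^{s,p,\d}(\O,\Rn)$ is contradicted by exactly the example you point out: by the proposition of Section \ref{eq:derivativelinear}, $u(x)=Fx$ with $F$ skew-symmetric lies in the space and satisfies $D^s_{\d}u\equiv F\neq 0$ while $D^s_{\d,\sym}u\equiv 0$. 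The statement therefore has to be restricted (to $H^{s,p,\d}_0(\O_{-\d},\Rn)$, as you do), or else reformulated modulo skew matrices or with a lower-order term as in your Korn-second variant. Your proof of the restricted version is complete and correct: for $u_j\in C^\infty_c(\O_{-\d},\Rn)$ the support computation gives $Q^s_{\d}*u_j\in C^\infty_c(\O,\Rn)$, so Korn's first inequality applies (with a constant independent of the domain, by extension by zero and the Riesz-transform identity, in the same spirit as Lemma \ref{le:KornRn}), the identity $D^s_{\d,\sym}u_j=D_{\sym}(Q^s_{\d}*u_j)$ follows by symmetrizing Theorem \ref{theo:DsdD}, and the estimate passes to the limit by the definition of $H^{s,p,\d}_0(\O_{-\d},\Rn)$ and of the $H^{s,p,\d}$ norm. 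Since the coercivity argument for \eqref{eq:weaklinearnonlocal} in Section \ref{se:existence} only tests the bilinear form on $H^{s,2,\d}_0(\O_{-\d},\Rn)$, your restricted inequality is all that the existence theorem needs, whereas the published statement and its citation of an unconstrained Korn inequality should be amended accordingly.
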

\begin{proof}
By Theorem \ref{theo:DsdD}, $D^s_{\d} u = D v$ for some $v \in W^{1,p} (\O, \Rn)$, so $D^s_{\d,\sym} u = D_{\sym} v$.
By Korn's inequality (e.g., \cite[Sect.\ 3.1]{AcDu17}),
\[
 \left\| D^s_{\d,\sym} u \right\|_{L^p (\O)} = \left\| D_{\sym} v \right\|_{L^p (\O)} \geq c \left\| D v \right\|_{L^p (\O)} = c \left\| D^s_{\d} u \right\|_{L^p (\O)} .
\]
\end{proof}

Now we show the existence and uniqueness of weak solutions of equation \eqref{eq:lineareq}.
As in classical linear elasticity, one assumes that the tensor $C$, viewed as a linear operator from $\Rnn_{\sym}$ to $\Rnn_{\sym}$, is positive definite.

\begin{theo}
Assume $C : \Rnn_{\sym} \to \Rnn_{\sym}$ is positive definite.
Let $f \in L^2 (\O, \Rn)$.
Then there exists a unique solution to \eqref{eq:weaklinearnonlocal}.
Moreover, it is the unique minimizer of the functional
\[
 E(v) = \frac{1}{2}\int_{\O} C D^s_{\d,\sym} v : D^s_{\d,\sym} v - \int_{\O} f \cdot v , \qquad v \in H^{s,2,\d}_0 (\O_{-\d}, \Rn) .
\]
\end{theo}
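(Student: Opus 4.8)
The plan is to recognize \eqref{eq:weaklinearnonlocal} as the abstract variational problem $a(v,w)=\ell(w)$ on the Hilbert space $X := H^{s,2,\d}_0(\O_{-\d},\Rn)$, where
\[
 a(v,w) = \int_{\O} C D^s_{\d,\sym} v : D^s_{\d,\sym} w , \qquad \ell(w) = \int_{\O} f \cdot w ,
\]
and apply the Lax--Milgram theorem. The first step is to record that, by Theorem \ref{th:PoincareSobolev delta} applied with $q=2$ (legitimate since $p=2>1$ and any of the three cases for $sp$ versus $n$ is covered), the quantity $\|D^s_\d \cdot\|_{L^2(\O)}$ is an equivalent norm on $X$: indeed $\|w\|_{L^2(\O_\d)} \le C\|D^s_\d w\|_{L^2(\O)}$ for $w \in X$. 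Continuity of $\ell$ then follows from Cauchy--Schwarz, $\O \subset \O_\d$, and this Poincaré--Sobolev estimate; continuity of $a$ follows from $|C e : \tilde e| \le \|C\|\,|e|\,|\tilde e|$ together with $|D^s_{\d,\sym}w| \le |D^s_\d w|$.

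The core of the argument is coercivity. Positive definiteness of $C : \Rnn_{\sym}\to\Rnn_{\sym}$ means there is $\a>0$ with $C e : e \ge \a|e|^2$ for all $e\in\Rnn_{\sym}$; taking $e = D^s_{\d,\sym} v$ gives $a(v,v) \ge \a\,\|D^s_{\d,\sym} v\|_{L^2(\O)}^2$. Applying the nonlocal Korn inequality of Proposition \ref{prop:Korn} yields $a(v,v) \ge \a c^2 \|D^s_\d v\|_{L^2(\O)}^2$, and finally the norm equivalence on $X$ noted above promotes this to $a(v,v) \ge \b\, \|v\|_{H^{s,2,\d}(\O)}^2$ for some $\b>0$. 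Lax--Milgram now delivers a unique $v \in X$ with $a(v,w)=\ell(w)$ for all $w\in X$, i.e.\ the unique solution of \eqref{eq:weaklinearnonlocal}.

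For the variational characterization, observe that $C$, being the Hessian $c_{ijkl} = \partial^2 W/\partial F_{ij}\partial F_{kl}(I)$, satisfies $c_{ijkl}=c_{klij}$, so $a$ is symmetric. Hence $E(v) = \tfrac12 a(v,v) - \ell(v)$ and, for $v$ solving the weak equation, a direct expansion gives $E(v+w) = E(v) + \tfrac12 a(w,w) \ge E(v)$ for all $w\in X$, with equality only if $a(w,w)=0$, i.e.\ $w=0$ by coercivity; thus the Lax--Milgram solution is the unique minimizer of $E$ over $X$. (Equivalently: $E$ is continuous, strictly convex and coercive on $X$ by the estimates above, so the direct method produces a minimizer, strict convexity gives uniqueness, and the Euler--Lagrange identity of the minimizer is exactly \eqref{eq:weaklinearnonlocal}.)

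I do not expect a serious obstacle here, since the two nontrivial analytic inputs, the nonlocal Korn inequality (Proposition \ref{prop:Korn}) and the nonlocal Poincaré--Sobolev inequality (Theorem \ref{th:PoincareSobolev delta}), are already available; the only point requiring mild care is verifying that these apply on the subspace $X = H^{s,2,\d}_0(\O_{-\d},\Rn)$ so that $\|D^s_\d\cdot\|_{L^2(\O)}$ is genuinely an equivalent norm there, which is what makes the coercivity estimate close.
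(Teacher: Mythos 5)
Your proposal is correct and follows essentially the same route as the paper: Lax--Milgram on $H^{s,2,\d}_0(\O_{-\d},\Rn)$, with coercivity obtained from the positive definiteness of $C$, the nonlocal Korn inequality of Proposition \ref{prop:Korn}, and the Poincar\'e--Sobolev estimate of Theorem \ref{th:PoincareSobolev delta}. Your additional remarks on the symmetry of $C$ (via $c_{ijkl}=c_{klij}$) and the expansion $E(v+w)=E(v)+\tfrac12 a(w,w)$ simply make explicit the standard symmetric-form variant of Lax--Milgram that the paper invokes implicitly.
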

\begin{proof}
It is a standard application of Lax--Milgram's lemma.
The bilinear form
\[
 (v, w) \mapsto \int_{\O} C D^s_{\d,\sym} v : D^s_{\d,\sym} w
\]
is clearly continuous in $H^{s,2,\d} (\O, \Rn)$, hence in $H^{s,2,\d}_0 (\O_{-\d}, \Rn)$.
Moreover, it is symmetric.
The linear form
\[
 w \mapsto \int_{\O} f \cdot w
\]
is also continuous in $H^{s,2,\d} (\O, \Rn)$, hence in $H^{s,2,\d}_0 (\O_{-\d}, \Rn)$.
It is, then, enough to check the coercivity of the bilinear form.

Since $C$ is positive definite in $\Rnn_{\sym}$, there exists $c_1>0$ such that
\begin{equation}\label{eq:Hsym}
 C e : e \geq c_1 |e|^2 , \qquad e \in \Rnn_{\sym}.
\end{equation}
So, for all $v \in H^{s,2,\d}_0 (\O_{-\d}, \Rn)$,
\[
 \int_{\O} C D^s_{\d,\sym} v : D^s_{\d,\sym} v \geq c_1 \int_{\O} \left| D^s_{\d,\sym} v \right|^2 \geq c_2 \int_{\O} \left| D^s_{\d} v \right|^2 \geq c_3 \left\| v \right\|^2_{H^{s,p,\d}_0 (\O_{-\d}, \Rn)},
\]
for other constants $c_2, c_3 >0$, thanks to Proposition \ref{prop:Korn} and Theorem \ref{th:PoincareSobolev delta}.
The proof is complete.
\end{proof}

In order to apply Lax--Milgram's lemma in theorem above, we do not need that $f \in L^2 (\O, \Rn)$ but only that $f$ lies in the dual of $H^{s,2,\d}_0 (\O_{-\d}, \Rn)$, which is a space that has not been studied yet, but, at least, thanks to Theorem \ref{th:PoincareSobolev delta}, we can assert that it contains $L^q (\O, \Rn)$ for
\begin{equation}\label{eq:q}
\begin{cases}
 q\in \left[\frac{2 n}{n + 2 s} , \infty \right] & \text{if } 2 s <n , \\
 q\in (1, \infty] & \text{if } 2 s =n , \\
 q\in[1,\infty] & \text{if } 2 s >n .
\end{cases}
\end{equation}

\section{Existence and uniqueness of the linear fractional equation}\label{se:fractional}

When it comes to adapting the process of linearization of Section \ref{se:linearization} to the case of the Bessel spaces $H^{s,p} (\Rn, \Rn)$ in terms of the Riesz derivative $D^s$ instead of the nonlocal derivative $D^s_{\d}$, one finds the obstacle that the identity does not belong to $H^{s,p} (\Rn, \Rn)$ given the lack of integrability in $\Rn$. % by formally substituting the nonlocal derivative $D^s_{\d}$ by the Riesz derivative $D^s$, but in this case the process is dubious, since it involves a formal linearization around the identity, which does not belong to $H^{s,p} (\Rn, \Rn)$.
%\textcolor{blue}{We notice that moving $\d$ to zero in \eqref{eq:weaklinearnonlocal} would not give us the linear fractional equations either, given the normalization of $D^s_\d$ (hypothesis \ref{item:wnormalization} at the beginning of Section \ref{se:functional}), excepts, perhaps, if $W$ behaves like a quadratic function at infinity, so that its second derivative would be constant at infinity.}
%
However, if we could assume that the linearization of $T_R (D^s u)$ is $C D^s v$,
% as happens in Section \ref{se:linearization},
then the process would be completed verbatim as in Sections \ref{se:linearization} and \ref{se:existence}.%, we just sketch the results.
	
An account of the main properties of the space $H^{s,p}$, the Riesz derivative $D^s$ and related fractional operators can be found in \cite{ShSp15,ShSp17,CoSt19,BeCuMo20} and \cite[Sect.\ 15.2]{Ponce16}.
In fact, formally $D^s$ is an extreme case of $D^s_{\d}$ when $\delta =\infty$ and $w_\d = 1$.

%The starting point are
We recall the Euler--Lagrange equations found in \cite[Th.\ 6.2]{BeCuMo20}:
\[
\begin{cases}
 - \diver^s T_R (x, D^s u (x)) = f(x) & \text{ in } \O , \\
 u = \id & \text{ in } \O^c .
\end{cases}
\]
The expected linearization then becomes
\begin{equation}\label{eq:lineareqRiesz}
 \begin{cases}
 - \diver^s C D^s_{\sym} v = f & \text{ in } \O , \\
 v = 0 & \text{ in } \O^c ,
 \end{cases}
\end{equation}
while its weak form is:
\begin{equation}\label{eq:weaklinearfractional}
 \text{Find } \ v \in H^{s,2}_0 (\O, \Rn) \ \text{ such that } \ \int_{\Rn} C D^s_{\sym} v : D^s_{\sym} w = \int_{\O} f \cdot w , \qquad \forall w \in H^{s,2}_0 (\O, \Rn).
\end{equation}

We recall that $H^{s,2}_0 (\O, \Rn)$ is the closure in $H^{s,2} (\Rn, \Rn)$ of $C^{\infty}_c (\O, \Rn)$.
To obtain this weak formulation one uses the fractional integration by parts formula \cite[Th.\ 3.6]{BeCuMo20}.

We first need the following well-known version of Korn's inequality in $W^{1,p} (\Rn, \Rn)$.
We have not found an identical statement in the literature, so we prove it by adapting another version of the inequality.

\begin{lem}\label{le:KornRn}
Let $1 < p < \infty$.
Then there exists $c_p > 0$ such that for all $u \in W^{1,p} (\Rn, \Rn)$,
\[
 \left\| D_{\sym} u \right\|_{L^p (\Rn)} \geq c_p \left\| D u \right\|_{L^p (\Rn)} .
\]
\end{lem}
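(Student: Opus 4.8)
The plan is to derive this global Korn inequality from the classical second Korn inequality on bounded Lipschitz domains by an exhaustion/scaling argument, exploiting the scale invariance of all three quantities $\|D_{\sym}u\|_{L^p}$, $\|Du\|_{L^p}$ and the $L^p$ norm of lower-order terms under dilations $u\mapsto u(\lambda\,\cdot)$ appropriately normalized. Concretely, on a ball $B=B(0,1)$ the classical Korn inequality gives, for $u\in W^{1,p}(B,\Rn)$,
\[
 \|Du\|_{L^p(B)}\leq C\bigl(\|D_{\sym}u\|_{L^p(B)}+\|u\|_{L^p(B)}\bigr),
\]
and by subtracting the best approximating skew-symmetric affine map (an element of the finite-dimensional space of infinitesimal rigid displacements, on which $D_{\sym}=0$) one upgrades this to the second Korn inequality
\[
 \inf_{A\in\mathfrak{so}(n),\,b\in\Rn}\|u-(Ax+b)\|_{W^{1,p}(B)}\leq C\,\|D_{\sym}u\|_{L^p(B)} .
\]
Rescaling to $B(0,R)$ and tracking powers of $R$, the zeroth-order term in the rescaled inequality carries a factor $R^{-1}$ relative to the first-order term; letting $R\to\infty$ kills it, which is the mechanism that removes the lower-order term in $\Rn$.

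First I would reduce to $u\in C^\infty_c(\Rn,\Rn)$ by density, since both sides are continuous with respect to the $W^{1,p}(\Rn)$ norm and $C^\infty_c$ is dense. For such $u$ with $\supp u\subset B(0,R_0)$, I would apply the rescaled second Korn inequality on $B(0,R)$ with $R>R_0$: writing $u_R(y)=u(Ry)$, the rigid displacement infimum and the chain rule give
\[
 \|Du\|_{L^p(B(0,R))}\leq C\Bigl(\|D_{\sym}u\|_{L^p(B(0,R))}+R^{-1}\|u\|_{L^p(B(0,R))}\Bigr),
\]
with $C$ the fixed constant for the unit ball. Because $u$ has compact support, for every $R>R_0$ the integrals over $B(0,R)$ equal the integrals over $\Rn$, so this reads
\[
 \|Du\|_{L^p(\Rn)}\leq C\,\|D_{\sym}u\|_{L^p(\Rn)}+C R^{-1}\|u\|_{L^p(\Rn)},
\]
and letting $R\to\infty$ yields $\|Du\|_{L^p(\Rn)}\leq C\,\|D_{\sym}u\|_{L^p(\Rn)}$, i.e.\ the claim with $c_p=1/C$.

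The main obstacle is making the scaling bookkeeping of the rigid-displacement term clean: one must verify that the second Korn inequality (Korn with the rigid-displacement infimum rather than the full $W^{1,p}$ norm) holds on the unit ball with a constant depending only on $n$ and $p$, and that under $u\mapsto u(R\,\cdot)$ the infimum over infinitesimal rigid displacements transforms correctly so that no spurious factor of $R$ appears on the gradient side. I would cite the second Korn inequality from a standard reference (e.g.\ the same source \cite{AcDu17}, or Neff–Pauly–Witsch / Ciarlet) rather than reprove it, and carry out only the elementary dilation computation. A minor point to handle is that the rigid displacement does not affect $\|Du\|_{L^p}$ up to the fact that its own gradient (a skew matrix $A$) contributes; but since on a compactly supported $u$ the optimal $A,b$ can be taken to vanish outside the argument's support region, or more simply one subtracts the rigid displacement only inside $B(0,R)$ and notes $\|D(Ax+b)\|_{L^p(B(0,R))}=|A|\,|B(0,R)|^{1/p}$ while $\|D_{\sym}(Ax+b)\|=0$ forces the comparison to be with the infimum — so it is cleaner to phrase the argument so that we only ever compare $\|Du\|$ against $\|D_{\sym}u\|+R^{-1}\|u\|$ via the second Korn inequality, never introducing $A$ explicitly. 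With that phrasing the proof is just: density reduction, invoke second Korn on $B(0,R)$, use compact support to replace balls by $\Rn$, send $R\to\infty$.
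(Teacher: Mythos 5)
Your proof is correct, and it rests on the same basic mechanism as the paper's: reduce to $u\in C^\infty_c(\Rn,\Rn)$ by density, invoke the classical Korn inequality on the unit ball, and use the dilation $u\mapsto u(R\,\cdot)$, under which $\|Du\|_{L^p}$ and $\|D_{\sym}u\|_{L^p}$ scale by the same factor $R^{1-\frac{n}{p}}$. The genuine difference is how the zeroth-order term is removed. The paper rescales the function into the unit ball (setting $v(x)=u(Rx)$ with $\supp u\subset B(0,R)$, so $\supp v\subset B(0,1)$) and then uses Korn together with Poincar\'e for functions supported in $B(0,1)$, i.e.\ the homogeneous estimate $\|D_{\sym}v\|_{L^p}\geq c_p\|Dv\|_{L^p}$ with a constant independent of $R$, after which scaling back gives the claim at once. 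You instead keep $u$ fixed, transfer the inhomogeneous Korn inequality from $B(0,1)$ to $B(0,R)$ so that the lower-order term acquires the prefactor $R^{-1}$, use the compact support to replace $B(0,R)$ by $\Rn$, and send $R\to\infty$; this needs only the standard inequality $\|Du\|_{L^p(B)}\leq C\bigl(\|D_{\sym}u\|_{L^p(B)}+\|u\|_{L^p(B)}\bigr)$ for general $W^{1,p}(B,\Rn)$ functions and avoids Poincar\'e, at the cost of the limiting step. One small remark: your detour through the second Korn inequality with the infimum over infinitesimal rigid displacements is superfluous (and, as you note yourself, awkward because the skew part would contaminate the gradient side); the displayed inequality with the $R^{-1}\|u\|_{L^p}$ term follows directly from the plain inhomogeneous Korn inequality on the unit ball via the dilation bookkeeping, which is how your final phrasing in effect proceeds.
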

\begin{proof}
It is enough to prove the inequality for $u \in C^{\infty}_c (\Rn, \Rn)$.
Let $R>0$ be such that $\supp u \subset B(0,R)$.
Then the function $v \in C^{\infty}_c (\Rn, \Rn)$ defined by $v (x) = u (Rx)$ satisfies $\supp v \subset B(0,1)$.
By Korn's inequality (see, e.g., \cite[Th.\ 7.71]{DeDe12}) as well as Poincar\'e's, there exists $c_p >0$ such that
\[
 \left\| D_{\sym} v \right\|_{L^p (\Rn)} \geq c_p \left\| D v \right\|_{L^p (\Rn)} .
\]
Now, it is immediate to see that
\[
 \left\| D v \right\|_{L^p (\Rn)} = R^{1-\frac{n}{p}} \left\| D u \right\|_{L^p (\Rn)} \quad \text{and} \quad \left\| D_{\sym} v \right\|_{L^p (\Rn)} = R^{1-\frac{n}{p}} \left\| D_{\sym} u \right\|_{L^p (\Rn)} ,
\]
and, hence, the result follows.
\end{proof}

The relevant Korn's inequality in $H^{s,p}$ is as follows.
For this we recall that, for $0<\alpha<n$, the Riesz potential $I_{\alpha} : \Rn \setminus \{0\} \to \R$ is defined as
\begin{equation}\label{eq:Rieszalpha}
 I_{\alpha} (x) = \frac{1}{\gamma(\alpha)} \frac{1}{|x|^{n-\alpha}},
\end{equation}
with $\gamma$ defined by equality \eqref{eq:gamma}.

\begin{prop}\label{prop:KornRiesz}
Let $1 < p < \infty$.
Then there exists $c_p > 0$ such that for  all $u \in H^{s,p} (\Rn, \Rn)$,
\[
 \left\| D^s_{\sym} u \right\|_{L^p (\Rn)} \geq c_p \left\| D^s u \right\|_{L^p (\Rn)} .
\]
\end{prop}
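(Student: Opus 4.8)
The plan is to adapt the proof of Proposition~\ref{prop:Korn} to the fractional setting, the role of Theorem~\ref{theo:DsdD} now being played by the classical fact that the Riesz fractional gradient is a genuine gradient: for every $u \in H^{s,p}(\Rn,\Rn)$ one has $D^s u = D(I_{1-s} * u)$, where $I_{1-s}$ is the Riesz potential of order $1-s$ introduced in \eqref{eq:Rieszalpha} (this is exactly the formal limit $\d = \infty$, $w_\d \equiv 1$ of Theorem~\ref{theo:DsdD}; see \cite{ShSp15,ShSp17,CoSt19,BeCuMo20}). Since the maps $u \mapsto D^s u$ and $u \mapsto D^s_{\sym} u$ are continuous from $H^{s,p}(\Rn,\Rn)$ into $L^p(\Rn)$, and $C^{\infty}_c(\Rn,\Rn)$ is dense in $H^{s,p}(\Rn,\Rn)$ (see \cite{ShSp15,BeCuMo20}), it suffices to prove the inequality for $u \in C^{\infty}_c(\Rn,\Rn)$; the general case then follows by approximation.

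So fix $u \in C^{\infty}_c(\Rn,\Rn)$ and set $v := I_{1-s} * u$. Then $v \in C^{\infty}(\Rn,\Rn)$, differentiation under the convolution gives $Dv = I_{1-s}*Du = D^s u$, hence $D_{\sym}v = D^s_{\sym}u$, and both lie in $L^p(\Rn)$; moreover, with $\supp u \subset B(0,r_0)$, a direct estimate of the defining integrals yields the decay $|v(x)| \lesssim |x|^{-(n-1+s)}$ and $|Dv(x)| = |D^s u(x)| \lesssim |x|^{-(n+s)}$ for $|x| \ge 2r_0$. The only genuinely delicate point is that $v$ itself need not lie in $L^p(\Rn)$: if $\int_{\Rn} u \neq 0$ then $v$ decays only like $|x|^{-(n-1+s)}$, which fails $p$-integrability for $p$ near $1$, so Lemma~\ref{le:KornRn} cannot be applied to $v$ directly.

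I would remove this obstruction by truncation. Pick $\eta \in C^{\infty}_c(B(0,2))$ with $0 \le \eta \le 1$ and $\eta \equiv 1$ on $B(0,1)$, and put $v_R(x) := \eta(x/R)\,v(x) \in C^{\infty}_c(\Rn,\Rn)$ for $R>0$. Lemma~\ref{le:KornRn} yields $\|D_{\sym}v_R\|_{L^p(\Rn)} \ge c_p\,\|Dv_R\|_{L^p(\Rn)}$. Writing $Dv_R = \eta(\cdot/R)\,Dv + R^{-1}(D\eta)(\cdot/R) \otimes v$, the second term is supported in $\{R \le |x| \le 2R\}$, where $|v| \lesssim R^{-(n-1+s)}$, so its $L^p$ norm is $\lesssim R^{-1}\bigl(R^n R^{-p(n-1+s)}\bigr)^{1/p} = R^{\,n(1/p-1)-s}$, which tends to $0$ as $R \to \infty$ since $p>1$ and $s>0$. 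The same bound controls $D_{\sym}v_R - \eta(\cdot/R)\,D_{\sym}v$, while $\eta(\cdot/R)\,Dv \to Dv$ and $\eta(\cdot/R)\,D_{\sym}v \to D_{\sym}v$ in $L^p(\Rn)$ by dominated convergence; hence $\|Dv_R\|_{L^p(\Rn)} \to \|Dv\|_{L^p(\Rn)}$ and $\|D_{\sym}v_R\|_{L^p(\Rn)} \to \|D_{\sym}v\|_{L^p(\Rn)}$. Letting $R \to \infty$ in the inequality above gives $\|D^s_{\sym}u\|_{L^p(\Rn)} = \|D_{\sym}v\|_{L^p(\Rn)} \ge c_p\,\|Dv\|_{L^p(\Rn)} = c_p\,\|D^s u\|_{L^p(\Rn)}$, as desired.

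The main obstacle is precisely this failure of $I_{1-s}*u$ to be $p$-integrable; once it is bypassed (here by truncation, using the decay of $v$), everything reduces to the already established classical Korn inequality of Lemma~\ref{le:KornRn}. An alternative to the truncation step would be to invoke directly a homogeneous form of Korn's second inequality on $\Rn$, valid for all functions with $L^p$ gradient that vanish at infinity; such an inequality follows from the pointwise Calder\'on--Zygmund identity expressing $Dv$ through double Riesz transforms of $D_{\sym}v$, which is the route followed in \cite{Silhavy22}.
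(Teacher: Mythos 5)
Your proposal follows essentially the same route as the paper: reduce to $u \in C^{\infty}_c(\Rn,\Rn)$ by density, set $v = I_{1-s}*u$ so that $Dv = D^s u$ and $D_{\sym}v = D^s_{\sym}u$, and conclude from the classical Korn inequality of Lemma~\ref{le:KornRn}. The one point where you diverge is instructive: the paper simply asserts, with references to \cite{KrSc22,Silhavy20,CoSt19,ShSp15}, that $v \in W^{1,p}(\Rn,\Rn)$ and applies Lemma~\ref{le:KornRn} to $v$ directly, whereas you observe that $v$ itself may fail to be $p$-integrable (its decay is only $|x|^{-(n-1+s)}$ when $\int u \neq 0$, which is not in $L^p$ for $p$ close to $1$, or for $n=1$ and small $s$), and you repair this with a cutoff $v_R = \eta(\cdot/R)v$, checking that the commutator terms vanish as $R \to \infty$ thanks to the decay of $v$ and $p>1$, $s>0$; this amounts to proving a homogeneous Korn inequality valid for functions with merely $L^p$ gradient and decay at infinity. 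Your exponent bookkeeping ($R^{n(1/p-1)-s} \to 0$) is correct, so the argument is sound, and it is in fact more self-contained than the paper's at precisely this step; your closing remark that one could instead invoke the Calder\'on--Zygmund representation of $Dv$ through double Riesz transforms of $D_{\sym}v$ is the alternative route of \cite{Silhavy22}, which the paper also mentions.
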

\begin{proof}
It is enough to prove the inequality for $u \in C^{\infty}_c (\Rn, \Rn)$.
For such $u$ we define $v = I_{1-s} * u$, where $I_{1-s}$ is Riesz' potential.
Then, it is known that $v \in W^{1,p} (\Rn, \Rn)$ and $D v = D^s u$.
A precise statement of this can be found in \cite[Eq.\ (2.4)]{KrSc22}, although the result is earlier (see, e.g., \cite[Prop.\ 5.2]{Silhavy20}, \cite[Prop.\ 2.2]{CoSt19} and \cite[Th.\ 1.2]{ShSp15}).
Then, by Lemma \ref{le:KornRn},
\[
 \left\| D^s_{\sym} u \right\|_{L^p (\Rn)} = \left\| D_{\sym} v \right\|_{L^p (\Rn)} \geq c_p \left\| D v \right\|_{L^p (\Rn)} = c_p \left\| D^s u \right\|_{L^p (\Rn)} .
\]
\end{proof}

The existence and uniqueness of weak solutions of equation \eqref{eq:lineareqRiesz} is as follows.

\begin{theo}\label{th:wellposedfractional}
Assume $C : \Rnn_{\sym} \to \Rnn_{\sym}$ is positive definite.
Let $f \in L^2 (\O, \Rn)$.
Then there exists a unique weak solution to \eqref{eq:weaklinearfractional}.
Moreover, it is the unique minimizer of the functional
\[
 E(v) = \frac{1}{2}\int_{\Rn} C D^s_{\sym} v : D^s_{\sym} v - \int_{\O} f \cdot v , \qquad v \in H^{s,2}_0 (\O, \Rn) .
\]
\end{theo}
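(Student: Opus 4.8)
The plan is to mimic the proof of the corresponding nonlocal result (the theorem in Section \ref{se:existence}) and reduce everything to the Lax--Milgram lemma on the Hilbert space $H^{s,2}_0(\O,\Rn)$, equipped with the norm inherited from $H^{s,2}(\Rn,\Rn)$. The bilinear form $a(v,w) = \int_{\Rn} C D^s_{\sym} v : D^s_{\sym} w$ is bilinear, symmetric, and continuous on $H^{s,2}(\Rn,\Rn)$ (hence on the closed subspace $H^{s,2}_0(\O,\Rn)$), since $C$ is a bounded linear operator and $\|D^s_{\sym} v\|_{L^2(\Rn)} \le \|D^s v\|_{L^2(\Rn)} \le \|v\|_{H^{s,2}(\Rn)}$. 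The linear functional $w \mapsto \int_{\O} f \cdot w$ is continuous by Cauchy--Schwarz together with the continuous embedding of $H^{s,2}_0(\O,\Rn)$ into $L^2(\O,\Rn)$ (or into the appropriate $L^q$, by the fractional Sobolev--Poincar\'e inequality recalled implicitly for the Riesz derivative; in any case $f \in L^2(\O,\Rn)$ suffices and $\O$ is bounded).

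The crux is coercivity. Since $C$ is positive definite on $\Rnn_{\sym}$, there is $c_1>0$ with $C e : e \ge c_1 |e|^2$ for all $e \in \Rnn_{\sym}$, so $a(v,v) \ge c_1 \|D^s_{\sym} v\|_{L^2(\Rn)}^2$. Applying Proposition \ref{prop:KornRiesz} gives $a(v,v) \ge c_1 c_p^2 \|D^s v\|_{L^2(\Rn)}^2$. To upgrade the $\|D^s v\|_{L^2}^2$ term to the full $H^{s,2}_0(\O,\Rn)$-norm squared, I would invoke a fractional Poincar\'e inequality: for $v \in H^{s,2}_0(\O,\Rn)$ (so $v$ is supported in $\overline{\O}$), $\|v\|_{L^2(\Rn)} \le C(\O) \|D^s v\|_{L^2(\Rn)}$, which follows, e.g., from $v = I_{1-s} * D^s v$ and the mapping properties of the Riesz potential on the bounded set $\O$, or directly from the Sobolev inequality for $D^s$. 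Combining, $a(v,v) \ge c_3 \|v\|_{H^{s,2}(\Rn)}^2$ for some $c_3>0$, which is the required coercivity.

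With continuity and coercivity of $a$, and continuity of the load functional, Lax--Milgram yields a unique $v \in H^{s,2}_0(\O,\Rn)$ solving \eqref{eq:weaklinearfractional}. Since $a$ is moreover symmetric, the standard variational characterization applies: the solution coincides with the unique minimizer of $E(v) = \tfrac12 a(v,v) - \int_{\O} f \cdot v$ over $H^{s,2}_0(\O,\Rn)$. This last point follows because for symmetric coercive $a$ one has $E(v + w) = E(v) + [a(v,w) - \int_\O f\cdot w] + \tfrac12 a(w,w)$, so $v$ is a critical point of $E$ iff it solves the weak equation, and strict convexity (from coercivity) makes the critical point the unique global minimizer.

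The only genuine obstacle is the fractional Poincar\'e inequality on $H^{s,2}_0(\O,\Rn)$ needed in the coercivity step; this is the analogue, in the Riesz setting, of Theorem \ref{th:PoincareSobolev delta} for $D^s_\d$. It is available in the literature on Bessel/fractional Sobolev spaces (e.g.\ via the Sobolev inequality $\|v\|_{L^{2^*_s}(\Rn)} \le C \|D^s v\|_{L^2(\Rn)}$ combined with H\"older on the bounded set $\O$, when $2s<n$; the cases $2s=n$ and $2s>n$ are handled as in Theorem \ref{th:PoincareSobolev delta}), so I would simply cite it and then the proof is a verbatim repetition of the argument for the nonlocal theorem, with $D^s_\d$ replaced by $D^s$, $\O$ by $\Rn$ in the energy integral, and $H^{s,2,\d}_0(\O_{-\d},\Rn)$ by $H^{s,2}_0(\O,\Rn)$.
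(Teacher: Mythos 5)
Your proposal is correct and follows essentially the same route as the paper: Lax--Milgram on $H^{s,2}_0(\O,\Rn)$, with coercivity obtained from the positive definiteness of $C$, the fractional Korn inequality of Proposition \ref{prop:KornRiesz}, and the fractional Poincar\'e inequality (which the paper cites from Shieh--Spector), plus the standard symmetric-form variational characterization of the solution as the minimizer of $E$. The only blemish is the parenthetical identity $v=I_{1-s}*D^s v$ offered as one justification of the Poincar\'e step (that convolution gives $I_{2(1-s)}*Dv$, not $v$), but this is harmless since you also propose citing the inequality directly, exactly as the paper does.
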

\begin{proof}
It is enough to check the coercivity of the bilinear form
\[
 (v, w) \mapsto \int_{\Rn} C D^s_{\sym} v : D^s_{\sym} w
\]
in $H^{s,2}_0 (\O, \Rn)$.

Using \eqref{eq:Hsym}, Proposition \ref{prop:KornRiesz} and the Poincar\'e inequality of \cite[Th.\ 1.8]{ShSp15} (see also \cite[Th.\ 2.3]{BeCuMo20}) we find that for all $v \in H^{s,2}_0 (\O, \Rn)$,
\[
 \int_{\Rn} C D^s_{\sym} v : D^s_{\sym} v \geq c_1 \int_{\Rn} \left| D^s_{\sym} v \right|^2 \geq c_2 \int_{\Rn} \left| D^s v \right|^2 \geq c_3 \left\| v \right\|^2_{H^{s,2}_0 (\O, \Rn)},
\]
for some constants $c_1, c_2, c_3 >0$.
The proof is complete.
\end{proof}

As in Section \ref{se:existence}, it is enough that $f$ lies in the dual of $H^{s,2}_0 (\O, \Rn)$, which, thanks to \cite[Th.\ 1.8]{ShSp15} (see also \cite[Th.\ 2.3]{BeCuMo20}), contains $L^q (\O, \Rn)$ for $q$ as in \eqref{eq:q}.

Now we deal with the isotropic case, so as to write down the weak and strong forms of the equations.

The weak form of the equations of isotropic fractional linear elasticity is to find $v \in H^{s,2}_0 (\O, \Rn)$ such that for all $w \in H^{s,2}_0 (\O, \Rn)$,
\[
 \int_{\O} \left( 2 \mu  D^s_{\sym} v : D^s_{\sym} w + \lambda \diver^s v \diver^s w \right) = \int_{\O} f \cdot w ,
\]
where we have used $\tr D^s = \diver^s$ (see Proposition \ref{pr:fractionalidentities} below).

Regarding the strong form, we assume that $v \in C^{\infty}_c (\O, \Rn)$.
As in Section \ref{se:linearization}, if we prove the identities
\begin{equation}\label{eq:fractionalidentities}
\begin{split}
 & \tr D^s v = \diver^s v , \qquad \diver^s \left( (\diver^s v) I \right) = D^s (\diver^s v) , \\
 & \diver^s D^s v = \Delta^s v , \qquad \diver^s (D^s v)^T = D^s (\diver^s v) ,
\end{split}
\end{equation}
then the strong form of the fractional linear equations in the isotropic case are
\begin{equation*}%\label{eq:lineareqfractional}
 \begin{cases}
 - \mu \Delta^s v - (\mu + \lambda) D^s (\diver^s v) = f & \text{ in } \O , \\
 v = 0 & \text{ in } \O^c .
 \end{cases}
\end{equation*}

It turns out that identities \eqref{eq:fractionalidentities} have been proved or are immediate consequences of known results, as we now show.

\begin{prop}\label{pr:fractionalidentities}
Equalities \eqref{eq:fractionalidentities} hold for any $v \in C^{\infty}_c (\O, \Rn)$.
\end{prop}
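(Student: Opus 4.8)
The plan is to reduce each of the four identities in \eqref{eq:fractionalidentities} to the corresponding nonlocal identity already established (or to a routine Fourier-side computation) by exploiting the fact that $D^s$ is the ``$\delta=\infty$, $w_\delta=1$'' limit of $D^s_\delta$, and that all four are linear pointwise identities between compositions of $D^s$, $\diver^s$ and $\Delta^s$. The cleanest route is the Fourier transform: on $C^\infty_c(\O,\Rn)\subset\mathcal S$, one has the well-known symbols $\widehat{D^s v}(\xi) = i\,\frac{\xi}{|\xi|}\otimes \widehat v(\xi)\,|\xi|^s$ (see \cite{ShSp15,CoSt19}), so that $D^s$ acts as a Fourier multiplier with matrix symbol $i|\xi|^{s-1}\xi$, and $\diver^s$ as the multiplier $i|\xi|^{s-1}\xi\cdot$, and $\Delta^s = \diver^s D^s$ as the scalar multiplier $-|\xi|^{2s}$ (the fractional Laplacian $-(-\Delta)^s$ up to sign convention). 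Each of the four identities then becomes an elementary algebraic identity between these symbols applied to $\widehat v$, valid since $\widehat v$ is a Schwartz function and the symbols are locally bounded away from possible trouble at $\xi=0$, which is harmless because $\widehat v$ is smooth there.

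First I would record the symbol computation: for $v\in C^\infty_c(\O,\Rn)$, since $v\in\mathcal S(\Rn,\Rn)$, its Riesz derivative, divergence and Laplacian are all Schwartz (or at least tempered with the stated multipliers), and the Fourier transforms are as above. I would cite the relevant statements from \cite{ShSp15,CoSt19,Silhavy20,BeCuMo20} for $\widehat{D^s v}$ and for $\diver^s$, and note $\Delta^s v = \diver^s D^s v$ from its definition (the fractional analogue of Definition~\ref{de:nonlocallaplacian}, already present in the text). Then:
\begin{enumerate}[label=(\roman*)]
\item $\tr D^s v = \diver^s v$: the symbol of $\tr D^s$ is $\tr\!\big(i|\xi|^{s-1}\xi\otimes(\cdot)\big) = i|\xi|^{s-1}\xi\cdot(\cdot)$, which is exactly the symbol of $\diver^s$; alternatively this is immediate from the pointwise definitions just as in Lemma~\ref{le:trD}.
\item $\diver^s\big((\diver^s v)I\big) = D^s(\diver^s v)$: writing $g=\diver^s v\in\mathcal S(\Rn)$, we have $\diver^s(gI)$ with symbol $i|\xi|^{s-1}\xi\cdot\big(\widehat g(\xi)I\big) = i|\xi|^{s-1}\widehat g(\xi)\,\xi = \widehat{D^s g}(\xi)$.
\item $\diver^s D^s v = \Delta^s v$: this is the definition of $\Delta^s v$.
\item $\diver^s (D^s v)^T = D^s(\diver^s v)$: the symbol of $(D^s v)^T$ is $i|\xi|^{s-1}\,\widehat v(\xi)\otimes\xi$, so applying $\diver^s$ gives $i|\xi|^{s-1}\xi\cdot\big(i|\xi|^{s-1}\widehat v(\xi)\otimes\xi\big) = -|\xi|^{2s-2}(\xi\cdot\xi)\,\widehat v(\xi)$, wait---more carefully, $\big(i|\xi|^{s-1}\widehat v\otimes\xi\big)$ has rows indexed so that its divergence (contracting the second index) yields $i|\xi|^{s-1}(\xi\cdot\xi)\widehat v\,|\xi|^{-1}\cdots$; rather than belabour indices I would simply observe that the symbol equals $i|\xi|^{s-1}\xi\,\big(i|\xi|^{s-1}\xi\cdot\widehat v(\xi)\big) = i|\xi|^{s-1}\xi\,\widehat{\diver^s v}(\xi) = \widehat{D^s(\diver^s v)}(\xi)$, using $(a\otimes b)^T c = (a\cdot c)b$ on the symbol level. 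This mirrors exactly the pointwise argument of Lemma~\ref{le:divDT}, and in fact (iv) is simply the $\delta=\infty$, $w_\delta\equiv1$ instance of that lemma, so an alternative is to quote \cite[Lemma 3.4]{BeCuMo23} / the fractional analogue in \cite{BeCuMo20} directly.
\end{enumerate}
Inverting the Fourier transform then gives each identity as an equality of Schwartz functions, in particular pointwise on $\Rn$, a fortiori on $\O$.

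The main obstacle is bookkeeping rather than substance: one must be careful that $\diver^s$ and $D^s$ applied to a $C^\infty_c$ function land in a space where the Fourier transform and its inverse behave well (they do: $D^s v$ and $\diver^s v$ are in $L^p$ for the relevant range and, being of the form $I_{1-s}*(\text{Schwartz})$ differentiated, are smooth with all the decay one needs for the multiplier identities to be literal equalities, not merely distributional). Equivalently, and perhaps more in the spirit of the rest of the paper, one can avoid Fourier analysis entirely: identities (i) and (iii) are definitional/trivial, and identities (ii) and (iv) follow by repeating verbatim the change-of-variables-and-Fubini arguments of Lemmas~\ref{le:trD} and \ref{le:divDT} with $w_\delta\equiv 1$ and the ball $B(0,\delta)$ replaced by $\Rn$ (all integrals remain absolutely convergent by the same order-two Taylor expansion of $v\in C^\infty_c$, now against the integrable-at-infinity-after-cancellation kernel $|z|^{-n-s}$), using the fractional integration by parts formula \cite[Th.\ 3.6]{BeCuMo20} in place of Proposition~\ref{th:Nl parts}. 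I would present the proof along these lines, pointing to Lemma~\ref{le:trD} and Lemma~\ref{le:divDT} for (i), (iii), (iv) and giving the one-line symbol computation for (ii).
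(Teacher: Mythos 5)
Your proposal is correct in substance, and your fallback route (repeat the arguments of Lemmas~\ref{le:trD} and \ref{le:divDT} with $w_\d \equiv 1$ and $B(0,\d)$ replaced by $\Rn$, checking absolute convergence via the second-order Taylor bound near the origin and compact support/boundedness at infinity) is essentially what the paper does: its proof is little more than pointers, namely that $\tr D^s v = \diver^s v$ is proved as in \cite[Lemma 3.4]{BeCuMo23}, that $\diver^s\bigl((\diver^s v)I\bigr) = D^s(\diver^s v)$ follows easily from \cite[Lemma 3.7]{BeCuMo20}, that $\diver^s D^s v=\Delta^s v$ is taken from \cite[Th.\ 5.3]{Silhavy20} and \cite[Th.\ 3.2]{DeGuOlKa21}, and that $\diver^s(D^s v)^T = D^s(\diver^s v)$ is proved as in Lemma~\ref{le:divDT}. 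Your primary (Fourier-multiplier) route is a genuine alternative and is fine, provided you justify that the multiplier identities persist when $D^s$ or $\diver^s$ is applied a second time to functions that are no longer compactly supported ($D^s v$ decays only like $|x|^{-(n+s)}$); you flag this yourself, and it can be handled either distributionally or via the representation $D^s = I_{1-s}*D$ and the semigroup property of Riesz potentials, as in Section~\ref{se:Eringen}.

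The one point to fix is your treatment of the third identity. In the paper $\Delta^s$ is \emph{not} defined as $\diver^s D^s$ (only $\Delta^s_\d$ is, in Definition~\ref{de:nonlocallaplacian}); in the fractional setting $\Delta^s$ denotes the standard fractional Laplacian $-(-\Delta)^s$, and the equality $\diver^s D^s v = \Delta^s v$ is a substantive theorem, which is why the paper cites \cite{Silhavy20,DeGuOlKa21} for it. So you cannot dismiss (iii) as ``definitional''; however, your own symbol computation (the symbol of $\diver^s D^s$ being $-|2\pi\xi|^{2s}$, i.e.\ that of $-(-\Delta)^s$) does prove exactly this statement once the multiplier formalities above are in place, or you may simply quote the references as the paper does. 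With that adjustment, and tidying the index bookkeeping in (iv) (which, as you note, is just Lemma~\ref{le:divDT} with $w_\d\equiv 1$), your argument is complete.
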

\begin{proof}
Equality
\[
 \tr D^s v = \diver^s v
\]
can be proved as in \cite[Lemma 3.4]{BeCuMo23}.
Equality
\[
 \diver^s \left( (\diver^s v) I \right) = D^s (\diver^s v)
\]
can be proved easily by using \cite[Lemma 3.7]{BeCuMo20}.
Equality
\[
 \diver^s D^s v = \Delta^s v
\]
has been proved in \cite[Th.\ 5.3]{Silhavy20} and \cite[Th.\ 3.2]{DeGuOlKa21}, although the result is earlier.
Finally, equality
\[
 \diver^s (D^s v)^T = D^s (\diver^s v)
\]
can be proved as in Lemma \ref{le:divDT}.
\end{proof}

We mention that Proposition \ref{prop:KornRiesz} and Theorem \ref{th:wellposedfractional} were proved, with different perspective and techniques, by \cite{Silhavy22}, where the equations of fractional linear elasticity were considered without a detailed derivation.
We have included a proof here to stress that the techniques of the nonlocal case can be adapted to the fractional case, and vice versa.
In fact, a thorough study of the isotropic fractional linear elasticity equations has been done in \cite{Scott23}, where it is included a derivation of the equation from a different point of view.

 \section{Connection with Eringen's model}\label{se:Eringen}

It is interesting to compare the linearisation of this model with other nonlocal linear models that have appeared in the literature.
Here is where the relationship with nonlocal Eringen's model \cite{EringenBook} shows up. This model, which has been used with popularity among the engineering community, turns out to be in general ill-posed for smooth kernels, whereas for non-smooth kernels like the Riesz potential it does admit a solution, as shown in \cite{EvBe}.
Initially we introduce the model considered in \cite{EvBe}, which follows \cite{Polizzotto}, without detailing the functional framework:
 \begin{equation} \label{eq: Eringen model}
  \begin{cases}
  -\diver \sigma =f,  &\text{ in $\Omega$} \\
  \sigma (x) =\int_{\Omega}A(x,x')CD_{\sym} v(x') \, dx', &\text{ in $\Omega$} \\
  v=\bar{v}, &\text{ on $\p \O$} .
  \end{cases}  
  \end{equation}
Here $C$ is the (positive definite) fourth-order elasticity tensor, as in Section \ref{se:linearization}, and $\sigma$ the nonlocal stress tensor, which is obtained through averaging  the local stress tensor $C D_{\sym}v$.
The term $A(x,x')$ is the nonlocal interaction kernel between particles accounting for stress interaction.
We assume that $A$ belongs to $L^1(\O\times\O)$ and is a strictly positive kernel, i.e., it satisfies the so-called Mercer's condition,
 \[\int_\O\int_\O A(x,x') \psi(x)\psi(x')\,dx\,dx' >0,\]
 for any $\psi\in C_c^\infty(\O)\setminus \{0\}$.
 
As for the corresponding functional framework, we notice that the symmetric bilinear form 
\[
 (v,w)_A = \int_\Omega\int_\Omega A(x,x')Dv(x):Dw(x')\,dx\,dx'
\]
defines an inner product over $C_c^\infty(\Omega,\Rn)$, and, consequently, induces a norm $\left\| \cdot \right\|_A$. We define the space $V_A$ as the completion of $C_c^\infty(\Omega,\Rn)$ with respect to the norm $\left\| \cdot \right\|_A$. 

Obtaining now the weak formulation of model \eqref{eq: Eringen model} is a standard task.
Given a function $v_0:\Omega\to\Rn$ satisfying the boundary condition, we multiply the equilibrium equation by a test function $w \in C_c^\infty(\Omega,\Rn)$ and integrate by parts to arrive at
  \begin{equation}\label{eq:weakEringen1}
  \begin{split}
 & \text{Find } \ v\in v_0+V_A \ \text{ such that } \ a(v,w)= \int_{\Omega} f \cdot w , \qquad \forall w \in C_c^\infty(\Omega,\Rn), \\
 & \text{where } \ a(v,w) := \int_{\Omega} \int_{\Omega} A(x,x')C D_{\sym}v(x) : D_{\sym}w(x') \, dx' \, dx .
 \end{split}
  \end{equation}
In fact, by density, the test function $w$ can be taken in $V_A$.
  
As is typical in Eringen's model, we will additionally assume that the kernel $A: \O \times \O \to [0, \infty)$ has the form $A(x,x')=\tilde A(|x-x'|)$ for some function $\tilde A: (0, \infty)\to [0, \infty)$, so that $\tilde A (d)$ describes the nonlocal interaction of particles at a distance $d> 0$. 
For simplicity, we consider homogeneous Dirichlet boundary conditions, i.e., $\bar v=0$ in \eqref{eq: Eringen model}, hence $v_0=0$ in the previous weak formulation.
With a small abuse of notation, we assume that functions defined in $\O$ (such as functions in $C_c^\infty(\Omega,\Rn)$ or in $V_A$) are extended by zero outside $\Omega$ to the whole space.

With these premises, the weak formulation \eqref{eq:weakEringen1} becomes
\begin{equation}\label{eq:weakEringen2}
  \begin{split}
 & \text{Find } \ v\in V_A \ \text{ such that } \ a(v,w)= \int_{\Omega} f \cdot w , \qquad \forall w \in V_A, \\
 & \text{where } \ a(v,w) := \int_{\Rn} \int_{\Rn} \tilde A(|x-x'|) C D_{\sym}v(x) : D_{\sym}w(x') \, dx' \, dx.
 \end{split}
\end{equation}

Although this model does not seem to coincide with the linear models presented in Sections \ref{se:fractional} and \ref{se:linearization}, two particular choices of the interaction function $\tilde A$ give rise, respectively, to equivalent problems:
\[
 \tilde A (|x|) = I_{2(1-s)} (x), \quad \text{and} \quad \tilde A (|x|) = Q^s_{\d}\ast Q^s_{\d} (x),
\]
where $I_{2(1-s)}$ is Riesz' potential \eqref{eq:Rieszalpha} and $Q^s_{\d}$ is the potential of Theorem \ref{theo:DsdD}.
 
% \textcolor{red}{No sé si es buena idea poner que $A$ está definido a partir de $I_{1-\a}$, cuando no se ha dicho quién es $\a$. De hecho, como se indica más abajo, el potencial de Riesz relacionado con $A$ sería $I_{2(1-s)}=I_{1-s}*I_{1-s}$, así que yo diría simplimente de cambiar en el párrafo anterior, $1-\a$ por $2(1-s)$.}

\subsection{Kernel $\tilde A$ as Riesz potential}\label{subse:KernelRiesz}
 
In \cite{EvBe}, it is shown that Eringen's model is well posed when the nonlocal interaction function $\tilde A$ is taken as the Riesz potential.
Here we will see the equivalence between this model and the fractional Navier equation \eqref{eq:lineareqRiesz}.

In order to see that, we explain the detailed framework.
Let $0<\alpha<n$ and recall the Riesz potential \eqref{eq:Rieszalpha}.
In formulation \eqref{eq:weakEringen2}, rename the kernel $A$ as $A_{\a}$, the interaction kernel $\tilde{A}$ as $\tilde{A}_{\a}$ and the space $V_A$ as $V_{A_{\a}}$, with
\[
 \tilde A_{\a} (|x|) = I_{\a} (x) ,
\]
so that model \eqref{eq:weakEringen2} becomes
\begin{equation}\label{eq:weakEringenfractional}
  \begin{split}
 & \text{Find } \ v\in V_{A_{\a}} \ \text{ such that } \ a_{\a} (v,w)= \int_{\Omega} f \cdot w , \qquad \forall w \in V_{A_{\a}}, \\
 & \text{where } \ a_{\a} (v,w) := \int_{\Rn} \int_{\Rn} I_{\a} \left(x-x' \right) C D_{\sym}v(x) : D_{\sym} w(x') \, dx' \, dx.
 \end{split}
\end{equation}
Comparing \eqref{eq:weaklinearfractional} and \eqref{eq:weakEringenfractional}, we can see that in order to prove that both models are equivalent it is enough to show that the bilinear forms and the spaces are equal.
Prior to do that, we need some facts about fractional Sobolev spaces.

The well-known fractional Sobolev space $H^s (\Rn, \Rn)$ \cite{Adams,DPV} is a Hilbert space endowed with the inner product 
 \[
 (u,v)_{H^s(\Rn,\Rn)} = (u,v)_{L^2 (\Rn,\Rn)} + \int_{\Rn} \int_{\Rn} \frac{(u(x)-u(x'))\cdot (v(x)-v(x'))}{|x-x'|^{n+2s}} \, dx' \, dx ,
\]
where $\left( \cdot, \cdot \right)_{L^2 (\Rn,\Rn)}$ is the inner product in $L^2 (\Rn,\Rn)$.
This space coincides (with equivalence of norms) with the Bessel space $H^{s,2}(\Rn,\Rn)$; see \cite[Ch.\ 7, pp.\ 221]{Adams}.
Therefore, the well known subspace $\tilde{H}^s(\Omega,\Rn)$, defined as the closure of $ C_c^\infty(\Omega,\Rn)$ in $H^s(\Rn,\Rn)$, coincides with $H^{s,2}_0(\Omega,\Rn)$ (see Section \ref{se:fractional}). For connections with the results in \cite{EvBe}, it is worth mentioning that $H_0^s(\Omega,\Rn)$, defined as the closure of $C_c^{\infty}(\Omega,\Rn)$ in $H^s(\Omega, \Rn)$, is isomorphic to $\tilde{H}^s(\Omega,\Rn)$ whenever $s\ne \frac{1}{2}$; see \cite[Th.\ 3.33]{mclean2000strongly}. 

%The natural space for Eringen's problem with the bilinear form $a_{\a}$ is $V_{A_{\a}}$.
%The next result establishes the equivalence between $V_{A_{\a}}$ and $H_0^{s,2}(\O,\Rn)$ when $\a = 2 (1-s)$.

The main result of this section establishes the equivalence between models \eqref{eq:weaklinearfractional} and \eqref{eq:weakEringenfractional} when $\a = 2 (1-s)$.

\begin{theo}\label{riesz-eringen}
Let $s\in\left(\max\{0,\frac{2-n}{2}\},1\right)$.
Then
\[
 V_{A_{2(1-s)}} = H^{s,2}_0(\Omega,\Rn)
\]
with equivalence of norms, and 
\begin{equation}\label{eq:bilinearriesz}
 a_{2(1-s)}(v,w)=\int_{\Rn} C D^s_{\sym} v(x) : D^s_{\sym} w (x) \, dx , \qquad v,w \in H_0^{s,2}(\Omega, \Rn) .
\end{equation}
%The following identity of spaces
%\[
% V_{A_{2(1-s)}} = H^{s,2}_0(\Omega,\Rn)
%\]
%with equivalence of norms. 
%Moreover, if $C : \Rnn \to \Rnn$ is a fourth-order constant tensor, then
%\begin{equation}\label{eq:bilinearriesz}
% a_{2(1-s)}(v,w)=\int_{\Rn} C D^s v(x) : D^s w (x) \, dx , \qquad v,w \in H_0^{s,2}(\Omega, \Rn) .
%\end{equation}
%If, in addition, $C$ is symmetric and vanishes on skew-symmetric matrices, then
%\[
% a_{2(1-s)} (v,w) =\int_{\Rn} C D^s_{\sym} v(x) : D^s_{\sym} w (x) \, dx , \qquad v,w \in H_0^{s,2} (\Omega, \Rn) .
%\]
 \end{theo}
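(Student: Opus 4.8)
The plan is to prove the two assertions—equality of the spaces with equivalence of norms, and equality of the bilinear forms—by reducing everything to the single key identity
\[
 D^s u = D(I_{1-s} * u) \qquad \text{for } u \in C^{\infty}_c(\Rn,\Rn),
\]
which was already invoked in the proof of Proposition~\ref{prop:KornRiesz}, together with the semigroup property $I_{\a} * I_{\b} = I_{\a+\b}$ of Riesz potentials (valid when $\a+\b<n$, which is exactly the role of the restriction $s > \frac{2-n}{2}$, i.e. $2(1-s) < n$). First I would work on the dense class $C^{\infty}_c(\O,\Rn)$ and compute, for such $u$ and $w$ (extended by zero),
\[
 a_{2(1-s)}(u,w) = \int_{\Rn}\int_{\Rn} I_{2(1-s)}(x-x')\, C D_{\sym}u(x) : D_{\sym}w(x')\, dx'\, dx .
\]
Writing $I_{2(1-s)} = I_{1-s} * I_{1-s}$ and using Fubini (justified by $I_{1-s} \in L^1_{\loc}$ and the compact supports of $Du$, $Dw$), the inner convolution turns $\int I_{1-s}(x-y) D_{\sym}w(y)\,dy$ into $D_{\sym}(I_{1-s}*w)(x)$, and similarly on the other factor, so that
\[
 a_{2(1-s)}(u,w) = \int_{\Rn} C\, D_{\sym}(I_{1-s}*u)(x) : D_{\sym}(I_{1-s}*w)(x)\, dx = \int_{\Rn} C D^s_{\sym}u : D^s_{\sym}w ,
\]
using $D(I_{1-s}*u) = D^s u$ and the fact that $C$ acts on symmetric matrices. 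This is exactly \eqref{eq:bilinearriesz} on the dense class.

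Next I would establish the norm equivalence. Taking $C = \id$ (or using positive definiteness of $C$ on $\Rnn_{\sym}$ together with Korn's inequality, Proposition~\ref{prop:KornRiesz}), the computation above shows that for $u \in C^{\infty}_c(\O,\Rn)$ the Eringen seminorm $\|u\|_A^2 = a(u,u)$ with kernel $I_{2(1-s)}$ equals (up to the constants $c_1$ from \eqref{eq:Hsym} and $c_p$ from Proposition~\ref{prop:KornRiesz}) a quantity comparable to $\|D^s u\|_{L^2(\Rn)}^2$. By the fractional Poincaré inequality of \cite[Th.\ 1.8]{ShSp15} (cited in Section~\ref{se:fractional}), $\|D^s u\|_{L^2(\Rn)}$ is an equivalent norm on $H^{s,2}_0(\O,\Rn)$. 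Hence $\|\cdot\|_A$ and $\|\cdot\|_{H^{s,2}(\Rn,\Rn)}$ are equivalent on $C^{\infty}_c(\O,\Rn)$; since $V_{A_{2(1-s)}}$ and $H^{s,2}_0(\O,\Rn)$ are by definition the completions of this common space under these two equivalent norms, they coincide as sets with equivalent norms. Finally, both sides of \eqref{eq:bilinearriesz} are continuous bilinear forms on $H^{s,2}_0(\O,\Rn)$ in the $H^{s,2}$-norm (the left side because $|I_{2(1-s)}(x-x')|$ integrates against $D_{\sym}u \otimes D_{\sym}w$ boundedly in the $H^{s,2}_0$-norm, equivalently because it equals the right side on a dense set and that side is manifestly continuous), so the identity extends from $C^{\infty}_c(\O,\Rn)$ to all of $H^{s,2}_0(\O,\Rn)$ by density.

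I expect the main technical obstacle to be the careful justification of the manipulations with the Riesz potential: namely, verifying that $I_{2(1-s)}(x-x') C D_{\sym}u(x):D_{\sym}w(x')$ is absolutely integrable on $\Rn\times\Rn$ so that Fubini applies, and that the "convolution-splitting" $I_{2(1-s)} = I_{1-s}*I_{1-s}$ can be inserted and the order of integration rearranged to produce $D_{\sym}(I_{1-s}*u)$ and $D_{\sym}(I_{1-s}*w)$. The integrability is where the hypothesis $s \in \bigl(\max\{0,\frac{2-n}{2}\},1\bigr)$ enters decisively: it guarantees $0 < 2(1-s) < n$, so $I_{2(1-s)}$ is a genuine locally integrable Riesz kernel and the semigroup law $I_{1-s}*I_{1-s}=I_{2(1-s)}$ holds; compact support of $D_{\sym}u$ and $D_{\sym}w$ then closes the absolute-convergence argument. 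A secondary, more bookkeeping-level point is to confirm that the constant relating the two norms can be tracked through Korn's and Poincaré's inequalities—but since the statement only claims equivalence (not equality) of norms, no sharp constant is needed. Everything else (density of $C^{\infty}_c(\O,\Rn)$, continuity of the forms, the identity $D(I_{1-s}*u)=D^s u$) is either definitional or already recorded in Section~\ref{se:fractional}.
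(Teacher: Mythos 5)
Your proof is correct and takes essentially the same route as the paper: on the dense class $C^{\infty}_c(\Omega,\Rn)$ you split $I_{2(1-s)}=I_{1-s}*I_{1-s}$ (this is where $s>\frac{2-n}{2}$ enters), use $D^s u=I_{1-s}*Du$, equivalently $D(I_{1-s}*u)=D^s u$, to rewrite both the $V_A$-inner product and $a_{2(1-s)}$ as $L^2$-pairings of fractional gradients, and then conclude by the fractional Poincar\'e inequality and density. One small correction: the $V_A$-norm is defined through the full gradients $Dv(x):Dw(x')$, not through $a(u,u)$, so the identity $\left\| u \right\|_{A_{2(1-s)}}^2=\left\| D^s u \right\|_{L^2(\Rn)}^2$ is exact and neither Korn's inequality (Proposition~\ref{prop:KornRiesz}) nor the positive definiteness of $C$ is needed for the space identification; your parenthetical route through them also yields the equivalence, but it is superfluous here.
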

\begin{proof}
Let $\a = 2 (1-s)$.
For $v \in C_c^\infty(\Omega,\Rn)$, its $s$-fractional gradient can be written as $D^s u = I_{1-s}*Du$ (see \cite[Th.\ 1.2]{ShSp15}).
Moreover,
\[
 \int_{\Rn} A_{\a} (x, x') D v (x) \, dx = \int_{\Rn} I_{\a} (x'-x) D v (x) \, dx = I_{\a} * Dv (x') , \qquad x' \in \Rn .
\]
With these equalities, making use of the fact $I_{\a} = I_{1-s} * I_{1-s}$ (the semigroup property for the Riesz potential) and
\[
 \int_{\Rn} \left( I_{\a} * g \right) h = \int_{\Rn} \left( I_{1-s} * I_{1-s} * g \right) h = \int_{\Rn} \left( I_{1-s} * g \right) \left( I_{1-s} * h \right), \qquad f, g \in C^{\infty}_c (\O)
\]
(see, e.g., \cite[Prop.\ 4.16]{Brezis}), we obtain
\begin{align*} 
 \left\| v \right\|_{A_{\a}} & = \int_{\Rn} \int_{\Rn} A_{\a} (x, x') D v (x) : D v (x') \, dx \, dx' = \int_{\Rn} I_{\a} * D v (x') : D v(x') \, dx' \\
 & = \int_{\Rn} I_{1-s} * D v (x') :  I_{1-s} * D v(x') \, dx' = \left\| D^s v \right\|_{L^2} .
\end{align*}
To finish, just notice that thanks to Poincar\'e's inequality in Bessel spaces (see \cite[Th.\ 2.2]{BeCuMo20} and the references therein), the seminorm $\|D^su\|_{L^2}$ is equivalent to the $H^{s,2}$ norm on the subspace $H^{s,2}_0(\Omega,\Rn)$.
This proves the equality $V_{A_{2(1-s)}} = H^{s,2}_0(\Omega,\Rn)$.

Now we show \eqref{eq:bilinearriesz}.
By density, it is enough to prove it for functions $v,w\in C_c^\infty(\O,\Rn)$.
Following the lines of the previous argument, we have
\begin{align*}
 a_{\a} (v,w) & = \int_{\Rn}\int_{\Rn}I_{\a} (x-x') C D_{\sym}v(x):D_{\sym}w(x')\,dx\,dx' \\
 & = \int_{\Rn} (I_{\a}\ast CD_{\sym}v)(x') :D_{\sym}w(x')\,dx' \\
 & = \int_{\Rn} (I_{1-s}\ast CD_{\sym}v)(x') : ( I_{1-s} \ast D_{\sym} )w(x')\,dx = \left( CD^s_{\sym}v,D^s_{\sym}w \right)_{L^2}.
\end{align*}
\end{proof}

\begin{rem} Previous proof may alternatively be done via Fourier transform. Namely, making use of Plancherel's equality and the fact that $\mathcal{F}(I_{\alpha}) (\xi) = |2\pi \xi|^{-\alpha}$, for $v,w\in C_c^\infty(\O,\Rn)$, we have
\begin{align*}
 a_{\a} (v,w) & = \int_{\Rn}\int_{\Rn}I_{\a} (x-x') C D_{\sym}v(x):D_{\sym}w(x')\,dx\,dx' \\
 & = \int_{\Rn} (I_{\a}\ast CD_{\sym}v)(x') :D_{\sym}w(x')\,dx'= \left( \mathcal{F}\{ I_{\a}\ast CD_{\sym}v\},\mathcal{F}\{D_{\sym}w\} \right)_{L^2} \\
 & = \left( |2\pi\xi|^{-2(1-s)}\mathcal{F}\{CD_{\sym}v\},\mathcal{F}\{D_{\sym}w\} \right)_{L^2} \\
 & = \left( |2\pi\xi|^{-(1-s)}\mathcal{F}\{CD_{\sym}v\},|2\pi\xi|^{-(1-s)}\mathcal{F}\{D_{\sym}w\} \right)_{L^2} \\
 & = \left( \mathcal{F}\{I_{1-s}\ast CD_{\sym}v\},\mathcal{F}\{I_{1-s}\ast D_{\sym}w\} \right)_{L^2} = \left( CD^s_{\sym}v,D^s_{\sym}w \right)_{L^2}.
\end{align*}
\end{rem}

We finish this section with a comparison with the results of \cite{EvBe}.
As mentioned before, $H_0^{s,2}(\Omega, \Rn)$ coincides with $H_0^s(\Omega,\Rn)$ for $s\neq \frac{1}{2}$, so Theorem \ref{riesz-eringen} recovers the result of \cite[Prop.\ 11]{EvBe} stating the equivalence between $V_{A_{2(1-s)}}$ and $H_0^s(\Omega,\Rn)$ for $s\neq \frac{1}{2}$.
Moreover, when $C$ is a positive definite tensor, Theorem \ref{th:wellposedfractional} proves the existence and uniqueness of solutions of \eqref{eq:weaklinearfractional} for any $s\in(0,1)$.
In \cite{EvBe}, it was shown using different techniques and another nonlocal Korn's inequality, existence and uniqueness of \eqref{eq:weakEringenfractional} for any $s\in(0,1)\setminus\{\frac{1}{2}\}$.

\subsection{Kernel $\tilde A$ as potential $Q^s_{\d}\ast Q^s_{\d}$}

In this section we consider the nonlocal linear elastic model \eqref{eq:weaklinearnonlocal} and will see that it is equivalent to Eringen's model when we consider as nonlocal interaction kernel the one determined by the function $Q^s_{\d}\ast Q^s_{\d}$, with $Q_{\d}^s$ the potential function from Theorem \ref{theo:DsdD}. Notice that, in opposition to Riesz potentials, the family of kernels $\{ Q_{\d}^s \}_{0 < s < 1}$ introduced in Theorem \ref{theo:DsdD} does not satisfy the semigroup property, so that convolution of different kernels of this kind is not in principle a kernel in the same class.

In the general Eringen model \eqref{eq:weakEringen2} we take the kernel $A$ as
\[
 A_{s,\d}(x,x') = (Q_{\d}^s\ast Q_{\d}^s) (x-x') .
\]
Since $Q_{\d}^s$ is radial and the convolution of radial functions is radial, $A_{s,\d}(x,x')$ depends on the modulus $|x-x'|$, and is strictly positive in the interior of its support by Theorem \ref{theo:DsdD}.
Moreover, $A_{s, \d} \in L^1 (\O \times \O)$ thanks to Young's inequality.
Therefore, the interaction function $\tilde{A}$ is here
\[
 \tilde{A}_{s, \d} (|x|) = Q_{\d}^s\ast Q_{\d}^s (x) .
\]
The bilinear form $A$ and the inner product $(\cdot, \cdot)_{V_A}$ are, respectively,
\begin{align*}
 & a_{s,\d}(v,w)=\int_{\O_{-\d}}\int_{\O_{-\d}} A_{s,\d}(x,x') CD_{\sym}v(x):D_{\sym}w(x')\,dx\,dx', \\
 & (v,w)_{V_{A_{s,\d}}}=\int_{\O_{-\d}}\int_{\O_{-\d}} A_{s,\d}(x,x') Dv(x):Dw(x')\,dx\,dx'.
\end{align*}
Similarly to Section \ref{subse:KernelRiesz}, we define the space $V_{A_{s,\d}}$ as the closure of $C_c^\infty(\O_{-\d},\Rn)$ with respect to the inner product $(\cdot,\cdot)_{V_{A_{s,\d}}}$.
With a small abuse of notation, we assume that functions defined on $\O_{-\d}$ are extended by zero first to $\O_{\d}\setminus\O_{-\d}$ and then to $\Rn$.
This is consistent with the fact that $\supp D^s_\d u \subset \O$ for $u \in C_c^\infty(\O_{-\d},\Rn)$, so that $\supp D^s_\d u$ can be extended by zero to $\Rn \setminus \O$.

The strong formulation of Eringen's problem is
\begin{equation} \label{eq: Eringen model Q}
  \begin{cases}
  -\diver \sigma =f,  &\text{ in $\Omega_{-\d}$} \\
  \sigma (x) =\int_{\Omega_{-\d}} A_{s, \d} (x,x')CD_{\sym} v(x')dx', &\text{ in $\Omega_{-\d}$} \\
  v=0, &\text{ on $\partial\O_{-\d}$,}
  \end{cases}  
  \end{equation}
and the weak formulation is:
\begin{equation}\label{eq:weakeringenQ}
 \text{Find } \ v\in V_{A_{s,\d}} \ \text{ such that } \ a_{s,\d} (v,w)= \int_{\Omega} f \cdot w , \qquad \forall w \in V_{A_{s,\d}}, \\
\end{equation}
for a given $f\in L^2(\O,\Rn)$.

The equivalence of problems \eqref{eq:weakeringenQ} and \eqref{eq:weaklinearnonlocal} follows exactly the steps of Section \ref{subse:KernelRiesz}.
The following result has a proof analogous to that of Theorem \ref{riesz-eringen}, but in this case invoking Poincar\'e's inequality given by Theorem \ref{th:PoincareSobolev delta}. 

\begin{theo} \label{th:nonlocal Hilbert spaces}
The  identity of spaces 
\[V_{A_{s,\d}}=H_0^{s,2,\delta}(\O_{-\d},\Rn)\]
holds with equivalence of norms.
Moreover,
\begin{equation}\label{eq:asd}
 a_{s,\d}(v,w)=\int_{\O} CD^s_{\d,\sym} v(x): D^s_{\d,\sym}w(x)\,dx, \qquad v,w\in H_0^{s,2,\d}(\O_{-\d}, \Rn) .
\end{equation}
\end{theo}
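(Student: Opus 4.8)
The plan is to mimic the proof of Theorem \ref{riesz-eringen}, replacing the Riesz potential $I_{1-s}$ by the kernel $Q^s_{\d}$ from Theorem \ref{theo:DsdD} and the semigroup identity $I_{2(1-s)} = I_{1-s} * I_{1-s}$ by the definition $\tilde A_{s,\d} = Q^s_{\d} * Q^s_{\d}$. First I would work with test functions $v, w \in C^{\infty}_c (\O_{-\d}, \Rn)$, for which Theorem \ref{theo:DsdD} gives the crucial pointwise identity $D^s_{\d} v = Q^s_{\d} * D v$ (and likewise $D^s_{\d, \sym} v = Q^s_{\d} * D_{\sym} v$, since convolution with a scalar kernel commutes with taking the symmetric part). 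Then, using Fubini and the elementary fact that for scalar $L^1$ kernels $\int_{\Rn} (g_1 * g_2 * h) \, k = \int_{\Rn} (g_1 * h)(g_2 * k)$ applied componentwise (entries of $C D_{\sym} v$ and $D_{\sym} w$), I would compute
\[
 a_{s,\d}(v,w) = \int_{\Rn} \left( (Q^s_{\d} * Q^s_{\d}) * C D_{\sym} v \right) : D_{\sym} w = \int_{\Rn} \left( Q^s_{\d} * C D_{\sym} v \right) : \left( Q^s_{\d} * D_{\sym} w \right) = \int_{\O} C D^s_{\d,\sym} v : D^s_{\d,\sym} w ,
\]
where in the last step the integrand is supported in $\O$ because $\supp D^s_{\d} v \subset \O$ for $v \in C^{\infty}_c(\O_{-\d})$. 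Taking $v = w$ gives $\|v\|^2_{A_{s,\d}} = \|D^s_{\d} v\|^2_{L^2(\O)}$, i.e. the norm of $V_{A_{s,\d}}$ coincides on $C^{\infty}_c(\O_{-\d}, \Rn)$ with the seminorm $\|D^s_{\d} \cdot\|_{L^2(\O)}$.

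Next I would upgrade this seminorm identity to the claimed norm equivalence. By the Poincar\'e--Sobolev inequality of Theorem \ref{th:PoincareSobolev delta} (with $q = 2$, $p = 2$), for $u \in H^{s,2,\d}_0(\O_{-\d}, \Rn)$ one has $\|u\|_{L^2(\O_{\d})} \leq C \|D^s_{\d} u\|_{L^2(\O)}$, so $\|D^s_{\d} \cdot\|_{L^2(\O)}$ is a norm on $C^{\infty}_c(\O_{-\d}, \Rn)$ equivalent to the full $H^{s,2,\d}(\O)$ norm. Since $H^{s,2,\d}_0(\O_{-\d}, \Rn)$ is by definition the completion of $C^{\infty}_c(\O_{-\d}, \Rn)$ for that norm, and $V_{A_{s,\d}}$ is the completion of the same space for the equivalent norm $\|\cdot\|_{A_{s,\d}}$, the two completions coincide as sets with equivalent norms. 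Finally, identity \eqref{eq:asd} extends from $C^{\infty}_c(\O_{-\d}, \Rn)$ to all of $H^{s,2,\d}_0(\O_{-\d}, \Rn)$ by density and continuity of both bilinear forms (the right-hand side is continuous on $H^{s,2,\d}(\O, \Rn)$ by boundedness of $C$, and the left-hand side is continuous for $\|\cdot\|_{A_{s,\d}}$ by Cauchy--Schwarz for the inner product $(\cdot,\cdot)_{V_{A_{s,\d}}}$ together with positive definiteness of $C$).

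The main obstacle I anticipate is justifying the convolution manipulations rigorously: the kernel $Q^s_{\d}$ is only in $L^1(\Rn)$ (not smooth, and singular behaviour near the boundary of its support is possible), so one must be careful that $(Q^s_{\d} * Q^s_{\d}) * C D_{\sym} v$, $Q^s_{\d} * C D_{\sym} v$ and $Q^s_{\d} * D_{\sym} w$ are all well-defined and that the reordering of integrals via Fubini is legitimate. This is handled by noting that $D_{\sym} v, D_{\sym} w \in C^{\infty}_c(\Rn, \Rnn_{\sym})$ are bounded with compact support while $Q^s_{\d} \in L^1$, so all iterated integrals are absolutely convergent and Young's inequality guarantees the convolutions lie in $L^1 \cap L^{\infty}$; the identity $\int (g_1 * g_2 * h) k = \int (g_1 * h)(g_2 * k)$ is then the standard associativity/commutativity of convolution together with the unfolding $\int (g * h) k = \int h (\check g * k)$ for the radial (hence even) kernel $g = Q^s_{\d}$, exactly as invoked via \cite[Prop.\ 4.16]{Brezis} in the Riesz case. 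A secondary point to state carefully is the support bookkeeping under the extension-by-zero convention: since $\supp D^s_{\d} v \subset \O$ the $L^2(\O)$ and $L^2(\Rn)$ versions of the right-hand side agree, so no discrepancy arises between the $\int_{\O}$ in \eqref{eq:asd} and the $\int_{\O_{-\d}}\int_{\O_{-\d}}$ defining $a_{s,\d}$.
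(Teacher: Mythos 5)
Your proposal follows essentially the same route as the paper's proof: the identity $D^s_{\d} v = Q^s_{\d} * D v$ from Theorem \ref{theo:DsdD}, the convolution unfolding via \cite[Prop.\ 4.16]{Brezis} with the kernel $Q^s_{\d}\ast Q^s_{\d}$, the Poincar\'e inequality of Theorem \ref{th:PoincareSobolev delta} to upgrade the seminorm identity, and the observation that both spaces are completions of $C^{\infty}_c(\O_{-\d},\Rn)$ under equivalent norms. One minor slip to fix: the norm of $V_{A_{s,\d}}$ comes from the inner product $(\cdot,\cdot)_{V_{A_{s,\d}}}$ (full gradients, no $C$), so the identity $\left\| v \right\|_{A_{s,\d}}^2 = \left\| D^s_{\d} v \right\|_{L^2(\O)}^2$ is obtained by running the same convolution computation on that inner product, not by setting $v=w$ in $a_{s,\d}$ — which is exactly what the paper does.
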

%\begin{proof}
%%The proof follows the lines of Theorem \ref{riesz-eringen}.
%For a function $v\in C_c^\infty(\O_{-\d},\Rn)$, using that, by Theorem \ref{theo:DsdD}, $D^s_{\d}v=Q_{\d}^s\ast Dv$, making use of Fourier transform  and applying Plancherel's formula, we have
%\begin{align*} 
% (v,v)_{A_{s,\d}} & =\int_{\Rn}\int_{\Rn} A_{s,\d}(x,x') Dv(x):Dv(x')\,dx\,dx'=\int_{\Rn} \left[(Q_{\d}^s\ast Q_{\d}^s)\ast Dv\right](x') :Dv(x')\,dx' \\
% & = \left( \mathcal{F}\{ (Q_{\d}^s\ast Q_{\d}^s)\ast Dv\},\mathcal{F}\{ Dv\} \right)_{L^2} = \left( |\mathcal{F}\{Q_{\d}^s\}|^2\mathcal{F}\{Dv\},\mathcal{F}\{Dv\} \right)_{L^2} \\
% & = \left( \mathcal{F}\{Q_{\d}^s\}\mathcal{F}\{Dv\},\mathcal{F}\{Q_{\d}^s\}\mathcal{F}\{Dv\} \right)_{L^2} = \left( \mathcal{F}\{Q_{\d}^s\ast Dv\},\mathcal{F}\{Q_{\d}^s\ast Dv\} \right)_{L^2} \\
% & = \left( \mathcal{F}\{D^s_{\d}v\},\mathcal{F}\{D^s_{\d}v\} \right)_{L^2}=\int_{\Rn} \left|D^s_{\d}v (x)\right|^2\,dx=\int_{\O} \left|D^s_{\d}v (x)\right|^2\,dx=\| D^s_{\d} v\|_{L^2(\O, \Rnn)}^2.
%\end{align*}
%Now, taking into account that, by Theorem \ref{th:PoincareSobolev delta}, the seminorm $\| D^s_{\d}v\|_{L^2(\O, \Rnn)}$ is equivalent to the norm of $H^{s,2,\d}(\O,\Rn)$ in the subspace $H^{s,2,\d}_0(\O_{-\d},\Rn)$, we conclude the result. 
%
%The proof of \eqref{eq:asd} follows the same steps to that of Theorem \ref{riesz-eringen}, but using also the arguments used above. 
%\end{proof}
\begin{proof}
For a function $v\in C_c^\infty(\O_{-\d},\Rn)$, using that $D^s_{\d}v=Q_{\d}^s\ast Dv$ (Theorem \ref{theo:DsdD}) and \cite[Prop.\ 4.16]{Brezis}, we have
\begin{align*} 
 (v,v)_{A_{s,\d}} & =\int_{\Rn}\int_{\Rn} A_{s,\d}(x,x') Dv(x):Dv(x')\,dx\,dx' = \int_{\Rn} \left[(Q_{\d}^s\ast Q_{\d}^s)\ast Dv\right](x') :Dv(x')\,dx' \\
 & = \int_{\Rn} \left( Q_{\d}^s \ast Dv \right)(x') : \left( Q_{\d}^s\ast Dv \right) (x')\,dx' = \left\| D^s_{\d} v \right\|_{L^2(\Rn, \Rnn)}^2 = \left\| D^s_{\d} v \right\|_{L^2(\O, \Rnn)}^2.
\end{align*}
Now, taking into account that, by Theorem \ref{th:PoincareSobolev delta}, the seminorm $\| D^s_{\d}v\|_{L^2(\O, \Rnn)}$ is equivalent to the norm of $H^{s,2,\d}(\O,\Rn)$ in the subspace $H^{s,2,\d}_0(\O_{-\d},\Rn)$, we conclude the result.

The proof of \eqref{eq:asd} follows the same steps to that of Theorem \ref{riesz-eringen}, but using also the arguments used above. 
\end{proof}

We notice that the functional spaces considered in Theorems \ref{riesz-eringen} and \ref{th:nonlocal Hilbert spaces} are actually the same, given the equality $H^{s,p,\d}_0(\O_{-\d})=H^{s,p}_0(\O_{-\d})$ shown in \cite[Lemma 2.16]{CuKrSc22}. Therefore, both approaches are defined on the same functional space, but working with different operators leading to different models.

%\begin{theo} Let $C$ be a fourth-order constant tensor. Then 
%\[a_{s,\d}(u,v)=\int_{\O} CD^s_{\d,\sym} u(x): D^s_{\d,\sym}v(x)\,dx,\]
%for any $u,v\in H_0^{s,2,\d}(\O_{-\d})$. Consequently, for a given $f\in L^2(\O,\Rn)$ and a positive definite tensor $C$, the weak form of the nonlocal Navier equation \eqref{eq:weaklinearnonlocal} , 
%\[\int_{\O}C D^s_{\d,\sym}u(x):D^s_{\d,\sym}v(x)\,dx=\int_{\O}f(x)\cdot v(x)\,dx\]
%and Eringen's model with kernel $A_{s,\d}$ \eqref{eq:weakeringenQ},
%\[\int_{\O_{-\d}}\int_{\O_{-\d}} A_{s,\d}(x,x')CD_{\sym}u(x): D_{\sym}v(x)\,dx\,dx'=\int_{\O}f(x)v(x)\,dx\]
%are equivalent as the bilinear forms defining both problems coincide. 
%\end{theo}
%The proof of this result follows the same steps to that of Theorem \ref{riesz-eringen}, using again Fourier transform and Plancherel's formula and taking into account that $D^s_{\d} u=Q_{\d}^s\ast Du$ for any $u\in C_c^\infty(\O,\Rn)$. 

\section*{Acknowledgements} 

This work has been supported by the Agencia Estatal de Investigaci\'on of the Spanish Ministry of Research and Innovation, through project PID2020-116207GB-I00 (J.C.B. and J.C.), and PID2021-124195NB-C32 and the Severo Ochoa Programme for Centres of Excellence in R\&D CEX2019-000904-S (C.M.-C.), by Junta de Comunidades de Castilla-La Mancha through project SBPLY/19/180501/000110 and European Regional Development Fund 2018/11744 (J.C.B. and J.C.), by the Madrid Government (Comunidad de Madrid, Spain) under the multiannual Agreement with UAM in the line for the Excellence of the University Research Staff in the context of the V PRICIT (Regional Programme of Research and Technological Innovation) (C.M.-C.), by the ERC Advanced Grant 834728 (C.M.-C.), and by Fundaci\'on Ram\'on Areces (J.C.).

%\bibliography{bibliography}

\begin{thebibliography}{10}

\bibitem{AcDu17}
{\sc G.~Acosta and R.~G. Dur\'{a}n}, {\em Divergence Operator and Related
  Inequalities}, SpringerBriefs in Mathematics, Springer, New York, 2017.

\bibitem{Adams}
{\sc R.~A. Adams}, {\em {S}obolev spaces}, vol.~65 of Pure and Applied
  Mathematics, Academic Press, New York-London, 1975.

\bibitem{Ball77}
{\sc J.~M. Ball}, {\em Convexity conditions and existence theorems in nonlinear
  elasticity}, Arch. Rational Mech. Anal., 63 (1977), pp.~337--403.

\bibitem{BeCuMo20}
{\sc J.~C. Bellido, J.~Cueto, and C.~Mora-Corral}, {\em Fractional {P}iola
  identity and polyconvexity in fractional spaces}, Ann. Inst. H. Poincar\'{e}
  C Anal. Non Lin\'{e}aire, 37 (2020), pp.~955--981.

\bibitem{BeCuMo22}
\leavevmode\vrule height 2pt depth -1.6pt width 23pt, {\em Nonlocal gradients
  in bounded domains motivated by continuum mechanics: Fundamental theorem of
  calculus and embeddings}.
\newblock ArXiv preprint 2201.08793, 2022.

\bibitem{BeCuMo23}
\leavevmode\vrule height 2pt depth -1.6pt width 23pt, {\em Minimizers of
  nonlocal polyconvex energies in nonlocal hyperelasticity}.
\newblock ArXiv preprint 2211.02640, 2023.

\bibitem{Brezis}
{\sc H.~Brezis}, {\em Functional analysis, {S}obolev spaces and partial
  differential equations}, Universitext, Springer, New York, 2011.

\bibitem{CoSt19}
{\sc G.~E. Comi and G.~Stefani}, {\em A distributional approach to fractional
  {S}obolev spaces and fractional variation: existence of blow-up}, J. Funct.
  Anal., 277 (2019), pp.~3373--3435.

\bibitem{CuKrSc22}
{\sc J.~Cueto, C.~Kreisbeck, and H.~Sch\"onberger}, {\em Variational analysis
  of integral functionals involving nonlocal gradients on bounded domains}.
\newblock ArXiv preprint 2302.05569, 2023.

\bibitem{DaNePe02}
{\sc G.~Dal~Maso, M.~Negri, and D.~Percivale}, {\em Linearized elasticity as
  {$\Gamma$}-limit of finite elasticity}, Set-Valued Anal., 10 (2002),
  pp.~165--183.

\bibitem{DeGuMeSc21}
{\sc M.~D'Elia, M.~Gulian, T.~Mengesha, and J.~M. Scott}, {\em Connections
  between nonlocal operators: from vector calculus identities to a fractional
  {H}elmholtz decomposition}, Fract. Calc. Appl. Anal., 25 (2022),
  pp.~2488--2531.

\bibitem{DeGuOlKa21}
{\sc M.~D'Elia, M.~Gulian, H.~Olson, and G.~E. Karniadakis}, {\em Towards a
  unified theory of fractional and nonlocal vector calculus}, Fract. Calc.
  Appl. Anal., 24 (2021), pp.~1301--1355.

\bibitem{DeDe12}
{\sc F.~Demengel and G.~Demengel}, {\em Functional spaces for the theory of
  elliptic partial differential equations}, Universitext, Springer, London; EDP
  Sciences, Les Ulis, 2012.

\bibitem{DPV}
{\sc E.~Di~Nezza, G.~Palatucci, and E.~Valdinoci}, {\em Hitchhiker's guide to
  the fractional {S}obolev spaces}, Bull. Sci. Math., 136 (2012), pp.~521--573.

\bibitem{EringenBook}
{\sc A.~C. Eringen}, {\em Nonlocal continuum field theories}, Springer-Verlag,
  New York, 2002.

\bibitem{EvBe}
{\sc A.~Evgrafov and J.~C. Bellido}, {\em From non-local {E}ringen's model to
  fractional elasticity}, Math. Mech. Solids, 24 (2019), pp.~1935--1953.

\bibitem{FrJaMu02}
{\sc G.~Friesecke, R.~D. James, and S.~M\"uller}, {\em A theorem on geometric
  rigidity and the derivation of nonlinear plate theory from three-dimensional
  elasticity}, Comm. Pure Appl. Math., 55 (2002), pp.~1461--1506.

\bibitem{Gurtin81}
{\sc M.~E. Gurtin}, {\em An introduction to continuum mechanics}, vol.~158 of
  Mathematics in Science and Engineering, Academic Press, New York-London,
  1981.

\bibitem{GuFrAn10}
{\sc M.~E. Gurtin, E.~Fried, and L.~Anand}, {\em The mechanics and
  thermodynamics of continua}, Cambridge University Press, Cambridge, 2010.

\bibitem{KrSc22}
{\sc C.~Kreisbeck and H.~Sch\"onberger}, {\em Quasiconvexity in the fractional
  calculus of variations: characterization of lower semicontinuity and
  relaxation}, Nonlinear Anal., 215 (2022), pp.~112625, 26.

\bibitem{MaHu94}
{\sc J.~E. Marsden and T.~J.~R. Hughes}, {\em Mathematical foundations of
  elasticity}, Dover Publications, Inc., New York, 1994.

\bibitem{mclean2000strongly}
{\sc W.~C.~H. McLean}, {\em Strongly elliptic systems and boundary integral
  equations}, Cambridge University Press, 2000.

\bibitem{Polizzotto}
{\sc C.~Polizzotto}, {\em Nonlocal elasticity and related variational
  principles}, Internat. J. Solids Structures, 38 (2001), pp.~7359--7380.

\bibitem{Ponce16}
{\sc A.~C. Ponce}, {\em Elliptic {PDE}s, measures and capacities}, vol.~23 of
  EMS Tracts in Mathematics, European Mathematical Society (EMS), Z\"{u}rich,
  2016.

\bibitem{Romano}
{\sc G.~Romano, R.~Luciano, R.~Barretta, and M.~Diaco}, {\em Nonlocal integral
  elasticity in nanostructures, mixtures, boundary effects and limit
  behaviours}, Contin. Mech. Thermodyn., 30 (2018), pp.~641--655.

\bibitem{Scott23}
{\sc J.~M. Scott}, {\em The fractional {L}am\'e--{N}avier operator:
  Appearances, properties and applications}.
\newblock ArXiv preprint 2204.12029, 2022.

\bibitem{ShSp15}
{\sc T.-T. Shieh and D.~E. Spector}, {\em On a new class of fractional partial
  differential equations}, Adv. Calc. Var., 8 (2015), pp.~321--336.

\bibitem{ShSp17}
\leavevmode\vrule height 2pt depth -1.6pt width 23pt, {\em On a new class of
  fractional partial differential equations {II}}, Adv. Calc. Var.,  (2017).

\bibitem{Silling2000}
{\sc S.~A. Silling}, {\em Reformulation of elasticity theory for
  discontinuities and long-range forces}, J. Mech. Phys. Solids, 48 (2000),
  pp.~175--209.

\bibitem{Silhavy20}
{\sc M.~\v{S}ilhav\'{y}}, {\em Fractional vector analysis based on invariance
  requirements (critique of coordinate approaches)}, Contin. Mech. Thermodyn.,
  32 (2020), pp.~207--228.

\bibitem{Silhavy22}
\leavevmode\vrule height 2pt depth -1.6pt width 23pt, {\em Fractional strain
  tensor and fractional elasticity}, J. Elast.,  (2022).

\end{thebibliography}
%
%\bibliographystyle{siam} 

\end{document}